\numberwithin{equation}{section}
\newtheorem{theorem}{Theorem}[section]
\newtheorem{lemma}{Lemma}[section]
\newtheorem{remark}{Remark}[section]
\newtheorem{algorithm}{Algorithm}[section]
\newcommand{\Real}{\mathbb{R}}
\newcommand{\bu}{{u}}
\newcommand{\bv}{{v}}
\newcommand{\bw}{{w}}
\newcommand{\bn}{{\bf n}}
\newcommand{\bchi}{{\chi}}
\begin{document}

\title{Unconditional long-time stability of a velocity-vorticity method for the 2D Navier-Stokes equations}
\author{
Timo Heister\footnote{Department of Mathematical Sciences, Clemson University, Clemson, SC 29634 (heister@clemson.edu), partially supported by the Computational Infrastructure in Geodynamics initiative (CIG), through the
National Science Foundation under Award No. EAR-0949446 and The University of California -- Davis.}
\and
Maxim A. Olshanskii
\footnote{Department of Mathematics, University of Houston, Houston TX 77004 (molshan@math.uh.edu), partially supported by Army Research Office Grant 65294-MA.}
\and
Leo G. Rebholz\footnote{Department of Mathematical Sciences, Clemson University, Clemson, SC 29634 (rebholz@clemson.edu), partially supported by Army Research Office Grant 65294-MA.}
}
\date{}

\maketitle

\begin{abstract}
We prove unconditional long-time stability for a particular velocity-vorticity discretization of the 2D Navier-Stokes equations.  The scheme begins with a  formulation that uses the Lamb vector to couple the usual velocity-pressure system to the vorticity dynamics equation, and then discretizes with the finite element method in space and implicit-explicit BDF2 in time, with the vorticity equation decoupling at each time step.  We prove the method's vorticity and velocity are both long-time stable in the $L^2$ and $H^1$ norms, without any timestep restriction.  Moreover, our analysis avoids the use of Gronwall-type estimates, which leads us to stability bounds  with only polynomial (instead of exponential) dependence on the Reynolds number.  Numerical experiments are given that demonstrate the effectiveness of the method.
\end{abstract}

\section{Introduction}
The paper addresses long-time stability of numerical methods for the two-dimensional Navier-€"Stokes system describing the motion of   incompressible Newtonian fluids:
\begin{equation}
\begin{split}
\frac{\partial {u}}{\partial t} - \nu \Delta {u} +
({u} \cdot \nabla) {u} + \nabla p &=
{f}, \\
\text{div} \; {u} &= 0,
\end{split}
\label{NS}
\end{equation}
where ${u}={u}({x},t)$ denotes a velocity vector field,  $p=p({x},t)$ is the pressure,  and $f=f({x},t)$ represents
(given) external forcing.
The solution to \eqref{NS} is well-known (see \cite{foias1989gevrey}) to be smooth for all time  in the periodic setting, that is, the domain $\Omega$ is a 2D torus $\mathbb{T}^2$, all functions have mean zero over the torus, and the forcing term $f$ is smooth.
Moreover, the solution of \eqref{NS} is long-time stable,  in the sense that the  norms $\|u\|_{L^2(\Omega)}$ and $\|u\|_{H^1(\Omega)}$ are bounded uniformly in time for $f\in L^\infty(\mathbb{R}_+,L^2(\Omega))$ and initial value $u_0\in H^1(\Omega)$, $\int_\Omega u_0=0$.
The long-time stability is a key  property of \eqref{NS} if one is interested in simulation of a large time scale phenomena or recovering long term statistics, as commonly the case for simulation of flows with large Reynolds' numbers, weather prediction, or climate modeling.  Therefore, it is of practical interest to design numerical methods for \eqref{NS} which inherit this important property. It is also interesting to explore to what extent popular numerical approaches to \eqref{NS} are long-time stable.

The topic of long-time stability and error control for numerical methods for the Navier--Stokes equations
is not new in the literature. Heywood and Rannacher in \cite{heywood1986finite,heywood1990finite} proved uniform in time stability and
error estimate in the \textit{energy norm} for a Crank--Nicolson  Galerkin method applied to 3D Navier-Stokes system, assuming
the solution of the initial boundary value problem is stable. Simo and Armero in \cite{simo1994unconditional}  examined the long-time stability in the energy norm  of several time integration algorithms, including coupled schemes and fractional step/projection methods.  More recent studies include the papers \cite{tone2006long,tone2007long,badia2010long,wang2012efficient,gottlieb2012long}.
The work of Tone and Wirosoetisno \cite{tone2006long,tone2007long} proved uniform in time bounds on $\|u(t_n)\|_{L^2(\Omega)}$ and  $\|\nabla u(t_n)\|_{L^2(\Omega)}$ for implicit Euler and Crank--Nicolson methods. These bounds are subject to restrictions on time step in terms of $\nu$ and a spatial discretization parameter.  Badia et al showed in  \cite{badia2010long} that $\nabla u \in L^\infty (0,\infty;L^2(\Omega))$ for a solution to spatially discretized equations \eqref{NS}.
First and second order semi-explicit time discretization methods for \eqref{NS} written in vorticity--stream function formulation  were studied by X. Wang and co-workers
in \cite{gottlieb2012long,wang2012efficient}. Both papers consider spectral discretization in space, and prove long-time stability bounds for the enstrophy and the $H^1$-norm of the vorticity, again all subject to a time step restriction  of the form $\Delta t\le c\,Re^{-1}$. Thus, despite progress, the current understanding of the long-time behavior of numerical methods for \eqref{NS} is far from being full: only a few studies address uniform in time error estimates for vorticity or velocity gradient, time step restrictions are common in the analyses, and semi-discrete methods are often treated rather than full discretizations. Moreover, to our knowledge, all proofs of long-time numerical stability bounds for vorticity and the gradient of velocity,  invoke a variant of the discrete Gronwall lemma, which results in the dependence of the bounds on the Reynolds number of the form $O(\exp(c^2 Re))$ or even  $O(\exp(c^2 Re^2))$. Although being time independent, such bounds are not very practical for higher   Reynolds number flows; see  \cite{johnson1995numerics} for a discussion and  an effort to improve numerical stability and error estimates dependence on $Re$ number, but only \textit{locally} in time.

In this paper, we prove unconditional long-time stability of a fully discrete numerical method for \eqref{NS}: For $f\in L^\infty (0,\infty;H^1(\Omega))$ we prove uniform in time estimates for the \textit{kinematic energy, enstrophy}, as well as  the $L^2$ norms of \textit{velocity gradient} and \textit{vorticity gradient} of a discrete system.
A finite element method is used for the spatial discretization, and both first and second order time stepping semi-implicit (linear at each time step) schemes are studied. The stability bounds are unconditional, i.e., absolutely no time step restrictions are imposed.
Furthermore, our analysis does not rely on any Gronwall type estimate, which allows us to \textit{avoid exponential dependence of stability bounds on the Reynolds number}. In the present analysis, the dependence is polynomial. Our analysis reveals that the polynomials degree can be significantly lowered at the expense of logarithmic dependence on the spatial mesh size.

The results of the paper systematically exploit the relationship between the vorticity and velocity of the Navier-Stokes system by considering the
vorticity dynamics equation and writing the inertia in the momentum equation in the form of Lamb vector. For $w=\nabla\times u$ and $P=\frac12|u|^2+p$, we reformulate \eqref{NS} as:
 \begin{equation}
\begin{split}
\frac{\partial {u}}{\partial t} - \nu \Delta {u} +
w\times {u} + \nabla P &=
{f}, \\
\text{div} \; {u} &= 0,\\
\frac{\partial {w}}{\partial t} - \nu \Delta {w} +
({u} \cdot \nabla) {w}  &=
\nabla\times{f},
\end{split}
\label{NSw}
\end{equation}
{ where $w\times u:= \left[  -u_2 w, \, u_1 w  \right]^T$.}  Vorticity  plays a fundamental role in fluid dynamics, and
studying properties of \eqref{NS} through the vorticity equation is a well established approach in the Navier-Stokes theory, see, e.g., \cite{majda2002vorticity,gallay2002invariant}. It is also not uncommon in numerical analysis to design numerical methods based on the vorticity equation, e.g.,~\cite{gatski1991review,gunzburger2012finite}. For numerical methods, standard closures for the vorticity equations are obtained either in  vorticity--stream function variables or with the help of the vector  Poisson equation, $\Delta u=-\nabla\times w$. However, recent papers \cite{OR10,LOR11} have demonstrated  numerical advantages of complementing the vorticity equation with the velocity dynamic equation as in \eqref{NSw}. Thus, \eqref{NSw} will be the departure  point in the present analysis.

The rest of the paper is organized as follows. Section \ref{s_prelim} gathers necessary definitions and preliminary results for the analysis that follows.
In Section~\ref{s_Euler}, we introduce a first order time stepping method and prove its long-time stability with respect to the velocity and vorticity $H^1$ norms.
Section~\ref{s_BDF} introduces a second order method based on BDF2 time discretization. We extend the long-time stability results for this method by taking care of some extra technical details. Since the numerical scheme is non-standard, we also provide with our analysis a series of numerical experiments for a 2D flow past a bluff object. The results of the
experiments are presented in Section~\ref{s_exp}, and they illustrate the long-time stability and the performance of the method.

We finish the introduction with the following remark. Most of our stability  analysis  is restricted to the 2D case and, due to the current lack of understanding of the long time behavior of 3D Navier-Stokes solutions, we cannot say to what an extend the results
remain valid in 3D. However, the numerical approach studied here has a straightforward extension to 3D, and relying on a past experience, we believe that numerical methods which are physically consistent and computationally efficient for 2D problems are
commonly found to be also advantageous for solving 3D Navier-Stokes equations.

\section{Notation and Preliminaries}\label{s_prelim}

We consider a domain $\Omega=(0,2\pi)^2\subset \Real^2$, and we restrict this study to the case of periodic boundary conditions.  We note that our stability analysis also holds for the case of full Dirichlet velocity and vorticity boundary conditions.

We use the notation $(\cdot,\cdot)$ and $\| \cdot \|$ for the $L^2(\Omega)$ inner product and norm, respectively.  All other norms will be clearly labeled with subscripts.

The natural velocity and pressure spaces in the periodic setting for the Navier-Stokes equations are
\begin{align*}
X & :=  H^1_{\#}(\Omega)^2 = \{ v\in H^1_{loc}(\mathbb{R})^2,\ v \mbox{ is $2\pi$-periodic in each direction}, \  \int_{\Omega} v \ dx=0\}, \\
Q & :=  L^2_{\#}(\Omega) = \{ q\in L^2_{loc}(\mathbb{R})^2,\ q \mbox{ is $2\pi$-periodic in each direction}, \ \int_{\Omega} q \ dx=0 \}.
\end{align*}
In two dimensions, vorticity is considered as a scalar, and we define vorticity space as
\[
 Y := H^1_{\#}(\Omega) = \{ v\in H^1_{loc}(\mathbb{R}),\ v \mbox{ is $2\pi$-periodic in each direction}, \  \int_{\Omega} v \ dx=0 \}.
 \]

For the discrete setting, we assume $\tau_h$ is a regular, conforming triangulation of $\Omega$ which is compatible with periodic boundary conditions.  Let $(X_h,Q_h)\subset (X,Q)$ be inf-sup stable velocity-pressure finite element spaces, $Y_h\subset Y$ be the discrete vorticity space, all defined as piecewise polynomials on $\tau_h$.

The discretely divergence-free subspace will be denoted by
\[
V_h := \{ v_h \in X_h,\ (\nabla \cdot v_h,q_h)=0\  \forall q_h\in Q_h \}.
\]
The dual space of $V_h$ is denoted by $V_h^*$ with norm $\| \cdot \|_{V_h^*}$.

We will utilize in our analysis discrete analogues of the Laplacian operator.  Define $\Delta_h$ to be the discrete Laplacian operator on $Y_h$:  Given $\phi \in H^1(\Omega)$, $\Delta_h \phi \in Y_h$ satisfies
\[
(\Delta_h \phi,v_h) = -(\nabla \phi,\nabla v_h) \ \ \forall v_h\in Y_h.
\]
Define $A_h$ to be a discretely divergence-free Laplace operator, often referred to as a Stokes operator by:  Given $\phi\in H^1(\Omega)$, $A_h\phi \in V_h$ satisfies
\[
(A_h \phi,v_h) = (\nabla \phi,\nabla v_h) \ \forall v_h\in V_h,
\]
or equivalently,
\[
(A_h \phi,v_h) - (l_h,\nabla\cdot v_h) + (\nabla \cdot A_h\phi,q_h) = (\nabla \phi_h,\nabla v_h) \ \ \forall (v_h,q_h) \in (X_h,Q_h),
\]
{ where $l_h$ is an artificial Lagrange multiplier used so the divergence constraint does not overdetermine the system.} {  Restricted to $X_h$ and $V_h$, respectively, the linear operators $-\Delta_h$ and $A_h$ are self-adjoint and positive definite. In this case, $(-\Delta_h)^{\frac12}:\,X_h\to X_h$ and $A_h^{\frac12}:\,V_h\to V_h$ are well defined and will be used in the paper.}

The Poincare inequality will be used heavily throughout:  there exists $\lambda$, dependent only on  $\Omega$, satisfying
\[
\| \phi \| \le  \lambda \| \nabla \phi \|\quad \forall \phi\in X,
\]
An immediate consequence on the Poincare inequality and the definition of discrete Stokes and Laplace operators is that the following bounds hold
\begin{align*}
\| \nabla v_h \| & \le  \lambda \| A_h  v_h \|\quad \forall v_h \in V_h, \\
\| \nabla z_h \| & \le  \lambda \| \Delta_h  z_h \|\quad\forall z_h \in Y_h.
\end{align*}
We recall  the following discrete Agmon inequalities, which are also consequences of discrete Gagliardo-Nirenberg estimates, see \cite{heywood1982finite} p.298:
\begin{align}
\| v_h \|_{L^{\infty}} &\le C \| v_h \|^{1/2} \| A_h v_h \|^{1/2} \ \forall v_h\in V_h, \label{agmon} \\
\| z_h \|_{L^{\infty}} &\le C \| z_h \|^{1/2} \| \Delta_h z_h \|^{1/2} \ \forall z_h\in Y_h, \label{agmon2},
\end{align}
where $C$ is independent of $h$.  The discrete Sobolev inequality (proven in \cite{GG08}),
\begin{equation}
\| \nabla \phi_h \|_{L^4} \le { C \| \nabla \phi_h \|^{1/2} \| \Delta_h \phi_h \|^{1/2} } \ \forall \phi_h\in X_h, \label{discreteSobolev}
\end{equation}
again with $C$ independent of $h$, allows us to prove the following lemma.
\begin{lemma}
For every $z_h\in Y_h$, there exists a constant $C$, independent of $h$, satisfying
\begin{align}
\| \nabla z_h \|_{L^3} \le C \| z_h \|^{1/3} \| \Delta_h z_h \|^{2/3} \ \forall z_h\in Y_h.  \label{agmon3}
\end{align}
\end{lemma}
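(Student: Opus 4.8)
The plan is to interpolate between the discrete Sobolev inequality \eqref{discreteSobolev} and a straightforward $L^2$ bound on $\nabla z_h$, using an $L^p$-interpolation (Hölder) argument to land exactly at the exponent $3$. First I would recall the elementary interpolation inequality for Lebesgue norms: for $2 \le 3 \le 4$, writing $\frac{1}{3} = \frac{\theta}{2} + \frac{1-\theta}{4}$ gives $\theta = \frac{1}{3}$, so that
\[
\| \nabla z_h \|_{L^3} \le \| \nabla z_h \|_{L^2}^{1/3} \| \nabla z_h \|_{L^4}^{2/3}.
\]
This is just Hölder's inequality applied to $|\nabla z_h|^{3} = |\nabla z_h|^{3\theta}\,|\nabla z_h|^{3(1-\theta)}$ with conjugate exponents $\frac{2}{3\theta} = 2$ and $\frac{4}{3(1-\theta)} = 2$, and it holds for any function, in particular for $z_h \in Y_h \subset X_h$.

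Next I would apply \eqref{discreteSobolev} with $\phi_h = z_h$ (legitimate since $Y_h \subset X_h$) to bound $\| \nabla z_h \|_{L^4} \le C \| \nabla z_h \|^{1/2} \| \Delta_h z_h \|^{1/2}$. Substituting this into the interpolation bound yields
\[
\| \nabla z_h \|_{L^3} \le \| \nabla z_h \|^{1/3} \bigl( C \| \nabla z_h \|^{1/2} \| \Delta_h z_h \|^{1/2} \bigr)^{2/3}
= C^{2/3} \| \nabla z_h \|^{1/3 + 1/3} \| \Delta_h z_h \|^{1/3}
= C^{2/3} \| \nabla z_h \|^{2/3} \| \Delta_h z_h \|^{1/3}.
\]
This is not yet the claimed form; it has $\| \nabla z_h \|^{2/3}$ where the lemma wants $\| z_h \|^{1/3} \| \Delta_h z_h \|^{2/3}$. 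To fix this I would use the Poincaré-type bound $\| \nabla z_h \| \le \lambda \| \Delta_h z_h \|$ on exactly \emph{half} of the factor $\| \nabla z_h \|^{2/3}$: write $\| \nabla z_h \|^{2/3} = \| \nabla z_h \|^{1/3} \| \nabla z_h \|^{1/3} \le \lambda^{1/3} \| \nabla z_h \|^{1/3} \| \Delta_h z_h \|^{1/3}$. Actually it is cleaner to apply the duality/integration-by-parts identity $\| \nabla z_h \|^2 = -(z_h, \Delta_h z_h) \le \| z_h \| \, \| \Delta_h z_h \|$, so $\| \nabla z_h \| \le \| z_h \|^{1/2} \| \Delta_h z_h \|^{1/2}$, and hence $\| \nabla z_h \|^{2/3} \le \| z_h \|^{1/3} \| \Delta_h z_h \|^{1/3}$. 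Plugging this in gives $\| \nabla z_h \|_{L^3} \le C \| z_h \|^{1/3} \| \Delta_h z_h \|^{1/3+1/3} = C \| z_h \|^{1/3} \| \Delta_h z_h \|^{2/3}$, which is \eqref{agmon3}, with $C$ independent of $h$ because all constants invoked (Poincaré $\lambda$, the constant in \eqref{discreteSobolev}) are $h$-independent.

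I do not anticipate a genuine obstacle here; the only point requiring a little care is verifying that the integration-by-parts identity $\| \nabla z_h \|^2 = -(z_h, \Delta_h z_h)$ is available for $z_h \in Y_h$ — but this is immediate from the definition of $\Delta_h$ by taking the test function $v_h = z_h$, and the Cauchy–Schwarz inequality then finishes it. One should also double-check the arithmetic of the exponents at each substitution so that the powers of $\| z_h \|$ and $\| \Delta_h z_h \|$ sum correctly to $\frac{1}{3}$ and $\frac{2}{3}$ respectively; this bookkeeping is the entirety of the "difficulty."
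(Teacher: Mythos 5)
Your proposal is correct and is essentially the paper's own proof: the same Hölder/interpolation step between $L^2$ and $L^4$, the same application of \eqref{discreteSobolev}, and the same use of $\| \nabla z_h \|^2 = -(z_h,\Delta_h z_h) \le \| z_h \| \| \Delta_h z_h \|$ to convert $\| \nabla z_h \|$ into $\| z_h \|^{1/2} \| \Delta_h z_h \|^{1/2}$ (the paper just works with cubes throughout and takes cube roots at the end). The brief detour via the Poincar\'e-type bound is unnecessary, but you correctly discard it in favor of the integration-by-parts identity.
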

\begin{proof}
By H\"older's inequality,
\[
\| \nabla z_h \|^3_{L^3} \le \| \nabla z_h \| \| \nabla z_h \|^2_{L^4},
\]
and thus using \eqref{discreteSobolev} provides the bound
\[
\| \nabla z_h \|^3_{L^3} \le { C} \| \nabla z_h \|^2 \| \Delta_h z_h \|.
\]
Since $\| \nabla z_h \|^2 = (\nabla z_h,\nabla z_h)=-(z_h,\Delta_h z_h) \le \| z_h \| \| \Delta_h z_h \|$, the estimate becomes
\[
\| \nabla z_h \|^3_{L^3} \le { C} \| z_h \| \| \Delta_h z_h \|^2.
\]
Taking cube roots of both sides completes the proof.
\end{proof}

Define the skew-symmetric trilinear operator $b^*: X_h\times Y_h \times Y_h \rightarrow \Real$ by
\[
b^*(u,w,\chi)=(u\cdot\nabla w,\chi) + \frac12 ((\nabla \cdot u)w,\chi).
\]
We will exploit the property that $b^*(u,w,w)=0$ in our analysis of the vorticity equation.

\section{Backward Euler}\label{s_Euler}

We first consider long-time stability of the velocity-vorticity scheme with finite element spatial discretization and backward Euler temporal discretization.  The algorithm decouples the vorticity equation by using a first order approximation of the vorticity in the momentum equation, and reads as follows.

\begin{algorithm} \label{bealg} \rm
Given the forcing $f$ and initial velocity $u_0$, set $u_h^0$ to be the interpolant of $u_0$, and $w_h^0$ the interpolant of the curl of $u_0$.  Select a timestep $\Delta t>0$, and for n=0,1,2,...\\
Step 1:  Find $(u_h^{n+1},{ P}_h^{n+1})\in (X_h,Q_h)$ satisfy for every $(v_h,q_h)\in (X_h,Q_h)$,
\begin{align}
\frac{1}{\Delta t}\left( u_h^{n+1} - u_h^n ,v_h\right) + ( w_h^n  \times u_h^{n+1},v_h) 
- ({ P}_h^{n+1},\nabla \cdot v_h) + \nu(\nabla u_h^{n+1},\nabla v_h) & =  (f^{n+1},v_h). \label{bescheme1} \\
(\nabla \cdot u_h^{n+1},q_h) & =  0, \label{bescheme2}
\end{align}
Step 2: Find $w_h^{n+1}\in Y_h$ satisfy for every $\chi_h\in Y_h$,
\begin{equation}
\frac{1}{\Delta t}\left( w_h^{n+1} - w_h^n,\chi_h\right) + b^*( u_h^{n+1} , w_h^{n+1},\chi_h) + \nu(\nabla w_h^{n+1},\nabla \chi_h) =  (\nabla \times f^{n+1},\chi_h). \label{bescheme3}
\end{equation}
\end{algorithm}
{ We note that $P_h^{n}$ represents Bernoulli pressure, and thus is intended to approximate $\frac12 | u(t^{n}) |^2 + p(t^{n})$.  To recover a zero-mean approximation to the kinematic pressure, one can rescale $p_h^{n}:=P_h^n - \frac12 |u_h^n|^2$ accordingly.}

We will prove long-time $L^2$ and $H^1$ stability of both the velocity and the vorticity.  We begin with the $L^2$ results.

\begin{theorem}[Long-time $L^2$ stability of velocity and vorticity]\label{thm1}
Suppose $f\in L^{\infty}(0,\infty;L^2(\Omega))$, and $u_0 \in H^1(\Omega)$. Denote $\alpha:=(1+\nu{ \lambda^{-2}}\Delta t)$.  For any $\Delta t>0$, we have that solutions of Algorithm \ref{bealg} satisfy for every positive integer $n$,
\begin{align}\label{est_u1}
 \| u_h^{n} \|^2+\frac{\nu{ \Delta t}}{2} \sum_{k=0}^{n-1}\left( \frac{1}{\alpha}\right)^{n-k}\|\nabla u_h^{k+1}\|^2 & \le    \left( \frac{1}{\alpha}\right)^{n} \| u_h^0 \|^2  +  \frac{2\alpha { \lambda^{2}}}{\nu^2}\,\| f \|_{L^{\infty}(0,\infty;V_h^*)}^2   =: C_0^2, \\ \label{est_w1}
 \| w_h^{n} \|^2+\frac{\nu{ \Delta t}}{2} \sum_{k=0}^{n-1}\left( \frac{1}{\alpha}\right)^{n-k}\|\nabla w_h^{k+1}\|^2 & \le    \left( \frac{1}{\alpha}\right)^{n} \| w_h^0 \|^2  +   \frac{2\alpha { \lambda^{2}}}{\nu^2 }\,\| f \|_{L^{\infty}(0,\infty;L^2(\Omega))}^2   =: C_1^2,
\end{align}
\end{theorem}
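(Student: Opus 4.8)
The plan is to run two almost identical energy arguments --- one for Step~1 (velocity) and one for Step~2 (vorticity) --- in each of which part of the viscous term is converted, via the Poincar\'e inequality, into a damping term that makes the one-step recursion contract by the factor $\alpha>1$. Summing the resulting geometric series then delivers the time-uniform bounds directly, with no discrete Gronwall lemma. A pleasant feature at this ($L^2$) level is that the two equations fully decouple, because the coupling terms vanish identically.

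For \eqref{est_u1} I would test \eqref{bescheme1}--\eqref{bescheme2} with $v_h=2\Delta t\,u_h^{n+1}\in V_h$. The pressure term drops since $u_h^{n+1}$ is discretely divergence free, and the Lamb term drops pointwise because $w_h^n\times u_h^{n+1}$ is orthogonal to $u_h^{n+1}$ (indeed $(w\times u)\cdot u=-u_2wu_1+u_1wu_2=0$). The polarization identity $2(a-b,a)=\|a\|^2-\|b\|^2+\|a-b\|^2$, after discarding $\|u_h^{n+1}-u_h^n\|^2\ge 0$, leaves
\[
\|u_h^{n+1}\|^2-\|u_h^n\|^2+2\nu\Delta t\|\nabla u_h^{n+1}\|^2\le 2\Delta t\,(f^{n+1},u_h^{n+1})\le 2\Delta t\,\|f^{n+1}\|_{V_h^*}\|\nabla u_h^{n+1}\|.
\]
Young's inequality in the form $2\Delta t\|f^{n+1}\|_{V_h^*}\|\nabla u_h^{n+1}\|\le\frac{\nu\Delta t}{2}\|\nabla u_h^{n+1}\|^2+\frac{2\Delta t}{\nu}\|f^{n+1}\|_{V_h^*}^2$ absorbs one quarter of the dissipation, and splitting the remaining $\frac{3\nu\Delta t}{2}\|\nabla u_h^{n+1}\|^2=\frac{\nu\Delta t}{2}\|\nabla u_h^{n+1}\|^2+\nu\Delta t\|\nabla u_h^{n+1}\|^2$ and applying $\|\nabla u_h^{n+1}\|^2\ge\lambda^{-2}\|u_h^{n+1}\|^2$ to the last piece yields the one-step estimate
\[
\alpha\|u_h^{n+1}\|^2+\frac{\nu\Delta t}{2}\|\nabla u_h^{n+1}\|^2\le\|u_h^n\|^2+\frac{2\Delta t}{\nu}\|f^{n+1}\|_{V_h^*}^2 .
\]
Dividing by $\alpha$ and iterating from $k=0$ to $n-1$ produces the weighted telescoped bound; using $\|f^{k+1}\|_{V_h^*}\le\|f\|_{L^{\infty}(0,\infty;V_h^*)}$ together with $\sum_{k=0}^{n-1}\alpha^{-(n-k)}=\sum_{j=1}^n\alpha^{-j}<\frac{\alpha}{\alpha-1}=\frac{\alpha\lambda^2}{\nu\Delta t}$ (recall $\alpha-1=\nu\lambda^{-2}\Delta t$) collapses the forcing sum to $\frac{2\alpha\lambda^2}{\nu^2}\|f\|_{L^{\infty}(0,\infty;V_h^*)}^2$, which is exactly \eqref{est_u1}.

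For \eqref{est_w1} I would test \eqref{bescheme3} with $\chi_h=2\Delta t\,w_h^{n+1}$. The convective term vanishes by skew-symmetry, $b^*(u_h^{n+1},w_h^{n+1},w_h^{n+1})=0$, and the forcing term is rewritten by moving the curl onto $w_h^{n+1}$ (legitimate since $w_h^{n+1}$ is periodic with zero mean), so that $|(\nabla\times f^{n+1},w_h^{n+1})|=|(f^{n+1},\nabla^{\perp}w_h^{n+1})|\le\|f^{n+1}\|\,\|\nabla w_h^{n+1}\|$; this is precisely why only $\|f\|_{L^2}$, rather than a norm of the curl of $f$, appears. From here the computation is verbatim the velocity one, giving $\alpha\|w_h^{n+1}\|^2+\frac{\nu\Delta t}{2}\|\nabla w_h^{n+1}\|^2\le\|w_h^n\|^2+\frac{2\Delta t}{\nu}\|f^{n+1}\|^2$ and hence \eqref{est_w1} after the same iteration.

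There is no serious obstacle; the argument is essentially careful bookkeeping. The point that must be handled with some care is the forcing: the $f$-terms must be estimated so that (i) all constants remain independent of $h$ and $\Delta t$ --- which is the role of the $V_h^*$ norm and of the integration by parts against $w_h^{n+1}$ --- and (ii) the surviving geometric factor $\sum_k\alpha^{-(n-k)}$ absorbs the explicit $\Delta t$, converting a bound that a priori would grow with $n$ into a time-uniform one. It is also worth emphasizing that the velocity-vorticity coupling is invisible here and will enter only in the subsequent $H^1$ estimates, where the Lamb term and the $b^*$ term no longer annihilate on the relevant test functions.
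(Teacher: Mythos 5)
Your proposal is correct and follows essentially the same route as the paper: test with $2\Delta t\,u_h^{n+1}$ and $2\Delta t\,w_h^{n+1}$ so the Lamb and $b^*$ terms vanish, split the dissipation after Young's inequality, convert part of it to a damping factor $\alpha$ via Poincar\'e, and sum the resulting geometric series (the paper's division by $\alpha^{N-n}$ followed by telescoping is the same iteration you describe). The details, including the integration by parts on $(\nabla\times f^{n+1},w_h^{n+1})$ and the constant $\frac{2\alpha\lambda^2}{\nu^2}$, match the paper's proof.
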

\begin{remark}\rm
The constants $C_0$ and $C_1$ are independent of $n$ and therefore hold for arbitrarily large $n$.   These bounds can be considered as  dependent only on the data (since time step sizes are inherently bounded above), and moreover, for sufficiently large $n$ the bounds are independent of the initial condition.
\end{remark}

\begin{proof}
Take $v_h=2\Delta t u_h^{n+1}$, $q_h={P}_h^{n+1}$, and $\chi_h=2 \Delta t w_h^{n+1}$, which vanishes the nonlinear and pressure terms, and leaves
\begin{align*}
\| u_h^{n+1} \|^2 - \| u_h^n \|^2 + \| u_h^{n+1} - u_h^n \|^2  + 2\Delta t \nu \| \nabla u_h^{n+1} \|^2 & = 2\Delta t (f^{n+1},u_h^{n+1}), \\
 \| w_h^{n+1} \|^2 - \| w_h^n \|^2 + \| w_h^{n+1} - w_h^n \|^2  + 2\Delta t \nu \| \nabla w_h^{n+1} \|^2 & = 2\Delta t (\nabla \times f^{n+1},w_h^{n+1}).
\end{align*}
We majorize the forcing terms after integrating by parts in the vorticity equation forcing term, applying Young's inequality, and dropping positive terms on the left hand sides to get
\begin{align*}
 \| u_h^{n+1} \|^2 - \| u_h^n \|^2 + \frac32\nu\Delta t \| \nabla u_h^{n+1} \|^2 & \le  2\nu^{-1}\Delta t \| f^{n+1} \|_{V_h^*}^2, \\
\| w_h^{n+1} \|^2 - \| w_h^n \|^2  + \frac32\nu\Delta t \| \nabla w_h^{n+1} \|^2 & \le  2\nu^{-1} \Delta t \|  f^{n+1} \|^2.
\end{align*}
From here, the velocity and vorticity estimates follow identically, except that the norm on the forcing term is different, and thus we restrict the remainder of the proof to only the velocity.  Applying the Poincare inequality to lower bound the viscous term yields
\begin{align*}
(1+\nu { \lambda^{-2}}\Delta t) \| u_h^{n+1} \|^2 + \frac\nu2\Delta t \| \nabla u_h^{n+1} \|^2 & \le  \| u_h^n \|^2   + 2\nu^{-1}\Delta t \| f^{n+1} \|_{V_h^*}^2.
\end{align*}
Now fix an integer $N>0$ and divide the above inequality by $\alpha^{N-n}$ to obtain
\begin{equation*}
 \left(\frac1\alpha\right)^{N-n-1}\| u_h^{n+1} \|^2 +  \left(\frac1\alpha\right)^{N-n} \frac\nu2\Delta t \| \nabla u_h^{n+1} \|^2  \le    \left(\frac1\alpha\right)^{N-n}\| u_h^n \|^2   +   \left(\frac1\alpha\right)^{N-n} 2\nu^{-1}\Delta t \| f^{n+1} \|_{V_h^*}^2.
\end{equation*}
Summing up for $n=0,\dots,N-1$ and reducing, we get
\begin{align*}
 \| u_h^{N} \|^2 +\frac{\nu{ \Delta t}}{2} \sum_{n=0}^{N-1}\left( \frac{1}{\alpha}\right)^{N-n}\|\nabla u_h^{n+1}\|^2 & \le \left( \frac{1}{\alpha}\right)^{N} \| u_h^0 \|^2   + 2\nu^{-1}\Delta t \| f \|_{L^{\infty}(0,\infty;V_h^*)}^2   \sum_{n=0}^{N-1} \left( \frac{1}{\alpha} \right)^{N-n} \\
  & \le
    \left( \frac{1}{\alpha}\right)^{N} \| u_h^0 \|^2   + 2\nu^{-1}\Delta t \| f \|_{L^{\infty}(0,\infty;V_h^*)}^2  \frac{ \alpha}{\alpha - 1 }.
\end{align*}
Substituting for $\alpha$ proves the velocity result.  Applying the same steps for vorticity produces estimate \eqref{est_w1}, which finishes the proof of the theorem.
\end{proof}

\begin{theorem}[Long-time $H^1$ stability of velocity] \label{thm2}
Suppose $f\in L^{\infty}(0,\infty;L^2(\Omega))$, and $u_0 \in H^1(\Omega)$. Denote $\alpha:=(1+\nu { \lambda^{-2}}\Delta t)$.  For any $\Delta t>0$, the solutions of Algorithm \ref{bealg} satisfy for every positive integer $n$,
\begin{equation}\label{est_u2}
 \| \nabla u_h^{n} \|^2  \le     \left( \frac{1}{\alpha}\right)^{n} \| \nabla u_h^0 \|^2  + \left(  2\nu^{-1} \| f \|_{L^{\infty}(0,\infty;L^2)}^2 + C \nu^{-3}C_1^4 C_0^2\right)  \frac{\alpha{ \lambda^{2}}}{\nu  } =: C_2^2.
\end{equation}
and
\begin{equation}\label{est_u2a}
 \| \nabla u_h^{n} \|^2  \le     \left( \frac{1}{\alpha}\right)^{n} \| \nabla u_h^0 \|^2  +   2\frac{\alpha{ \lambda^{2}}}{\nu^2  } \| f \|_{L^{\infty}(0,\infty;L^2)}^2 + C{ (1+|\ln h|)}\nu^{-2} C_1^2 C_0^2 =: \widetilde{C}_2^2.
\end{equation}
where $C$ is a generic constant, which depends on Sobolev's embedding inequalities optimal constants and constants from Agmon's type inequalities \eqref{agmon}--\eqref{agmon3}.
\end{theorem}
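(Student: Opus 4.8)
The plan is to run a discrete energy estimate on the momentum equation tested against the discrete Stokes operator applied to the \emph{new} velocity, and then to reproduce the telescoped geometric‑sum bookkeeping used in the proof of Theorem~\ref{thm1}, so that no discrete Gronwall lemma is needed. Taking $v_h=2\Delta t\,A_h u_h^{n+1}$ in \eqref{bescheme1} is admissible since $A_h u_h^{n+1}\in V_h$, which makes the pressure term vanish; using the defining relations of $A_h$ together with $u_h^{n},u_h^{n+1}\in V_h$ gives $(u_h^{n+1},A_h u_h^{n+1})=\|\nabla u_h^{n+1}\|^2$, $(u_h^n,A_h u_h^{n+1})=(\nabla u_h^n,\nabla u_h^{n+1})$ and $(\nabla u_h^{n+1},\nabla A_h u_h^{n+1})=\|A_h u_h^{n+1}\|^2$, so the polarization identity yields
\[
\|\nabla u_h^{n+1}\|^2-\|\nabla u_h^{n}\|^2+\|\nabla(u_h^{n+1}-u_h^n)\|^2+2\nu\Delta t\|A_h u_h^{n+1}\|^2 = 2\Delta t(f^{n+1},A_h u_h^{n+1})-2\Delta t(w_h^n\times u_h^{n+1},A_h u_h^{n+1}),
\]
and the forcing term is absorbed by Cauchy--Schwarz and Young, $2\Delta t(f^{n+1},A_h u_h^{n+1})\le \tfrac{\nu\Delta t}{2}\|A_h u_h^{n+1}\|^2+\tfrac{2\Delta t}{\nu}\|f^{n+1}\|^2$.

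The heart of the matter is the convective term. Since $|w_h^n\times u_h^{n+1}|=|w_h^n|\,|u_h^{n+1}|$, we have $|(w_h^n\times u_h^{n+1},A_h u_h^{n+1})|\le\|w_h^n\|\,\|u_h^{n+1}\|_{L^\infty}\,\|A_h u_h^{n+1}\|$. For \eqref{est_u2} I bound $\|u_h^{n+1}\|_{L^\infty}$ by the discrete Agmon inequality \eqref{agmon} and use Young's inequality with exponents $(4,\tfrac43)$, giving $2\Delta t|(\cdot)|\le \tfrac{\nu\Delta t}{4}\|A_h u_h^{n+1}\|^2+C\nu^{-3}\Delta t\,\|w_h^n\|^4\|u_h^{n+1}\|^2$. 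For \eqref{est_u2a} I instead bound $\|u_h^{n+1}\|_{L^\infty}\le C(1+|\ln h|)^{1/2}\|\nabla u_h^{n+1}\|$ (the standard logarithmic finite element estimate in 2D) and use Young with exponents $(2,2)$, giving $2\Delta t|(\cdot)|\le \tfrac{\nu\Delta t}{2}\|A_h u_h^{n+1}\|^2+C\nu^{-1}(1+|\ln h|)\Delta t\,\|w_h^n\|^2\|\nabla u_h^{n+1}\|^2$. In either case, inserting these together with the forcing bound into the identity, dropping $\|\nabla(u_h^{n+1}-u_h^n)\|^2\ge0$, and bounding the leftover viscous term from below via $\|A_h u_h^{n+1}\|^2\ge\lambda^{-2}\|\nabla u_h^{n+1}\|^2$ produces the factor $\alpha$ on $\|\nabla u_h^{n+1}\|^2$. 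This gives the one‑step recursion $\alpha\|\nabla u_h^{n+1}\|^2\le\|\nabla u_h^{n}\|^2+\tfrac{2\Delta t}{\nu}\|f^{n+1}\|^2+C\nu^{-3}\Delta t\|w_h^n\|^4\|u_h^{n+1}\|^2$ in the first case, and the same with the last term replaced by $C\nu^{-1}(1+|\ln h|)\Delta t\,\|w_h^n\|^2\|\nabla u_h^{n+1}\|^2$ in the second.

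To finish, fix $N$, divide the recursion by $\alpha^{N-n}$, and sum over $n=0,\dots,N-1$; the first two terms telescope exactly as in Theorem~\ref{thm1}, and $\Delta t\sum_{n=0}^{N-1}\alpha^{-(N-n)}\le \tfrac{\alpha}{\alpha-1}\Delta t=\tfrac{\alpha\lambda^2}{\nu}$. For \eqref{est_u2} the nonlinear data term is already uniformly bounded by $C_1^4C_0^2$ via Theorem~\ref{thm1}, so the geometric sum delivers $C_2^2$ directly. For \eqref{est_u2a}, after $\|w_h^n\|^2\le C_1^2$, the remaining sum $\sum_{n=0}^{N-1}\alpha^{-(N-n)}\Delta t\|\nabla u_h^{n+1}\|^2$ is controlled by the energy–dissipation bound already contained in \eqref{est_u1} evaluated at index $N$, namely $\Delta t\sum_{n=0}^{N-1}\alpha^{-(N-n)}\|\nabla u_h^{n+1}\|^2\le 2C_0^2/\nu$, which produces the term $C(1+|\ln h|)\nu^{-2}C_1^2C_0^2$ and hence $\widetilde{C}_2^2$. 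The main obstacle is exactly this convective term in the sharper estimate: it cannot be moved to the left‑hand side, since its coefficient need not be small at high Reynolds number and a timestep restriction is not allowed, so it must be carried through the division by $\alpha^{N-n}$ and absorbed into the a priori bound \eqref{est_u1}; the price is the logarithmic factor together with a finite element log‑Sobolev inequality, whereas the Agmon route of \eqref{est_u2} avoids the log at the cost of the higher enstrophy power $C_1^4$. Choosing the Young exponents so that precisely the right power of $\|A_h u_h^{n+1}\|$ stays absorbable is the only genuine calculation.
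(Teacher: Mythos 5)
Your proposal is correct and follows essentially the same route as the paper's proof: testing with $2\Delta t\,A_h u_h^{n+1}$, bounding the Lamb term via the discrete Agmon inequality (for \eqref{est_u2}) or the logarithmic $L^\infty$ inverse estimate (for \eqref{est_u2a}), extracting the factor $\alpha$ through Poincar\'e applied to $\|A_h u_h^{n+1}\|$, and closing the weighted geometric sum with the dissipation bound from \eqref{est_u1}. The only differences are immaterial constant choices in the Young splittings.
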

\begin{remark}\rm
The theorem above proves that the long-time  velocity solution is bounded in the $H^1$ norm only by the problem data, and similar to the $L^2$ bound, it is independent of the initial condition when $n$ is sufficiently large.  \\
With respect to the dependence on Re, the estimate \eqref{est_u2} gives $\| \nabla u_h^{n} \|\le O(Re^5)$,
while estimate \eqref{est_u2a} gives $\| \nabla u_h^{n} \|\le O({ (1+|\ln h|)}^{\frac12} Re^3)$.
\end{remark}

\begin{proof}
Take $v_h=2\Delta t A_h u_h^{n+1}$ in \eqref{bescheme1} to obtain
\[
\| \nabla u_h^{n+1} \|^2 - \| \nabla u_h^n \|^2 + \frac32\nu\Delta t \| A_h u_h^{n+1} \|^2 \le 2\nu^{-1}\Delta t \| f^{n+1} \|^2 + 2\Delta t | \left( w_h^n \times u_h^{n+1},A_h u_h^{n+1} \right) | .
\]
For the last term on the right-hand side, we majorize it first using Holder's inequality, the discrete Agmon inequality \eqref{agmon}, Young's inequality, and Theorem \ref{thm1} to find
\begin{align*}
 | \left( w_h^n \times u_h^{n+1},A_h u_h^{n+1} \right) |
 & \le  \| w_h^n \| \| u_h^{n+1} \|_{L^{\infty}} \| A_h u_h^{n+1} \| \\
 & \le  C \| w_h^n \| \| u_h^{n+1} \|^{1/2}  \| A_h u_h^{n+1} \|^{3/2} \\
 & \le  C\nu^{-3} \| w_h^n \|^4 \| u_h^{n+1} \|^{2} + \frac{\nu}{2}  \| A_h u_h^{n+1} \|^{2} \\
 & \le  C\nu^{-3}C_1^4 C_0^{2} + \frac{\nu}{2}  \| A_h u_h^{n+1} \|^{2}.
\end{align*}
Combining these last two inequalities produces
\[
\| \nabla u_h^{n+1} \|^2 - \| \nabla u_h^n \|^2 + \nu\Delta t \| A_h u_h^{n+1} \|^2 \le 2\nu^{-1}\Delta t \| f^{n+1} \|^2 + C\Delta t \nu^{-3} C_1^4 C_0^2,
\]
and thanks to Poincare, we obtain
\begin{align*}
\left(1+ \nu { \lambda^{-2}}\Delta t \right)\| \nabla u_h^{n+1} \|^2
& \le  \| \nabla u_h^n \|^2 + \Delta t \left(  2\nu^{-1} \| f \|_{L^{\infty}(0,\infty;L^2)}^2 + C \nu^{-3} C_1^4 C_0^2  \right).
\end{align*}
Recalling the notation $\alpha = \left(1+ \nu { \lambda^{-2}}\Delta t \right)$, this relation can be written as
\begin{equation}\label{aux1}
\| \nabla u_h^{n+1} \|^2   \le  \frac{1}{\alpha} \| \nabla u_h^n \|^2 +\frac{1}{\alpha} \Delta t\,\left(  2\nu^{-1} \| f \|_{L^{\infty}(0,\infty;L^2)}^2 + C \nu^{-3} C_1^4 C_0^2  \right).
\end{equation}
Recursive substitution and an estimate for the partial sum of a geometric progression lead us to \eqref{est_u2}.

Alternatively, we can employ the finite element inverse inequality { $\|\bu_h\|_{L^\infty(\Omega)}\le C(1+|\ln h|)^{\frac12}\|\nabla\bu_h\|$, valid in 2D (see p.124 in \cite{BS08})}, and estimate the nonlinear terms in the different way:
\begin{align*}
 | \left( w_h^n \times u_h^{n+1},A_h u_h^{n+1} \right) |
 & \le  \| w_h^n \| \| u_h^{n+1} \|_{L^{\infty}} \| A_h u_h^{n+1} \| \\
 & \le  C { (1+|\ln h|)}^{\frac12}\| w_h^n \| \| \nabla u_h^{n+1} \|  \| A_h u_h^{n+1} \| \\
 & \le C{ (1+|\ln h|)}\nu^{-1} C_1^2 \| \nabla u_h^{n+1} \|^{2} + \frac{\nu}{2}  \| A_h u_h^{n+1} \|^{2}.
\end{align*}
Similar arguments that produced \eqref{aux1} give
\begin{equation*}
\| \nabla u_h^{n+1} \|^2   \le  \frac{1}{\alpha} \| \nabla u_h^n \|^2 +\frac{1}{\alpha} \Delta t\,\left(  2\nu^{-1} \| f \|_{L^{\infty}(0,\infty;L^2)}^2 + C{ (1+|\ln h|)} \nu^{-1} C_1^2 \|\nabla u_h^{n+1} \|^{2}  \right).
\end{equation*}
Doing recursive substitution and employing \eqref{est_u1} to estimate the resulting sum $\sum_{k=1}^{n+1}\alpha^{k-n-1}  \|\nabla u_h^{k} \|^{2}$ leads to \eqref{est_u2a}.
\end{proof}

\begin{theorem}[Long-time $H^1$ stability of vorticity] \label{thm3}
Suppose $f\in L^{\infty}(0,\infty;H^1(\Omega))$, and $u_0 \in H^2(\Omega)$. Let $\alpha:=(1+\nu { \lambda^{-2}}\Delta t)$. For any $\Delta t>0$, solutions of Algorithm \ref{bealg} satisfy for every positive integer $n$,
\begin{equation}\label{est_w2}
 \| \nabla w_h^{n} \|^2 \le
      \left( \frac{1}{\alpha}\right)^{n} \| \nabla w_h^0 \|^2
 + \left(   2\nu^{-1} \| f \|_{L^{\infty}(0,\infty;H^1(\Omega))} + \nu^{-5} C_2^6 C_1^2 + C\nu^{-3} C_2^4 C_1^2      \right)  \frac{\alpha{ \lambda^{2}}}{\nu },
 \end{equation}
and
\begin{equation}\label{est_w2a}
 \| \nabla w_h^{n} \|^2 \le
      \left( \frac{1}{\alpha}\right)^{n} \| \nabla w_h^0 \|^2
 + \frac{2\alpha{ \lambda^{2}}}{\nu^2 } \| f \|_{L^{\infty}(0,\infty;H^1(\Omega))} + C { (1+|\ln h|)}\nu^{-2} \widetilde{C}_2^2 C_1^2.
 \end{equation}
\end{theorem}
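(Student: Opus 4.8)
The plan is to follow the template of Theorems~\ref{thm1}--\ref{thm2}, now testing the vorticity equation~\eqref{bescheme3} with $\chi_h=-2\Delta t\,\Delta_h w_h^{n+1}$. Using the definition of $\Delta_h$ one has $(w_h^{n+1}-w_h^n,-\Delta_h w_h^{n+1})=(\nabla w_h^{n+1},\nabla(w_h^{n+1}-w_h^n))$ and $(\nabla w_h^{n+1},\nabla\Delta_h w_h^{n+1})=-\|\Delta_h w_h^{n+1}\|^2$, so with the polarization identity the scheme yields
\[
\|\nabla w_h^{n+1}\|^2-\|\nabla w_h^n\|^2+\|\nabla(w_h^{n+1}-w_h^n)\|^2+2\nu\Delta t\,\|\Delta_h w_h^{n+1}\|^2 = 2\Delta t\,b^*(u_h^{n+1},w_h^{n+1},\Delta_h w_h^{n+1})-2\Delta t(\nabla\times f^{n+1},\Delta_h w_h^{n+1}).
\]
Dropping the positive difference term, the forcing term is bounded by Cauchy--Schwarz and Young's inequality (using $\|\nabla\times f^{n+1}\|\le\sqrt2\,\|f^{n+1}\|_{H^1}$), which produces a data term and absorbs $\tfrac{\nu\Delta t}{2}\|\Delta_h w_h^{n+1}\|^2$. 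Once the nonlinear term has likewise been absorbed (see below), a lower bound on the surviving viscous term via Poincaré, $\|\nabla w_h^{n+1}\|\le\lambda\|\Delta_h w_h^{n+1}\|$, introduces the factor $\alpha=1+\nu\lambda^{-2}\Delta t$; dividing the resulting recursion by $\alpha^{N-n}$, summing over $n$, and estimating the geometric partial sum by $\sum_n(1/\alpha)^{N-n}\le\alpha/(\alpha-1)=\alpha\lambda^2/(\nu\Delta t)$ yields the stated bounds, exactly as in the earlier proofs.

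The heart of the argument is the nonlinear term $b^*(u_h^{n+1},w_h^{n+1},\Delta_h w_h^{n+1})=(u_h^{n+1}\cdot\nabla w_h^{n+1},\Delta_h w_h^{n+1})+\tfrac12((\nabla\cdot u_h^{n+1})w_h^{n+1},\Delta_h w_h^{n+1})$; the skew-symmetry $b^*(u,w,w)=0$ is of no use here because the third slot carries $\Delta_h w_h^{n+1}$ rather than $w_h^{n+1}$, and the divergence piece genuinely survives since $u_h^{n+1}$ is only discretely divergence free. For~\eqref{est_w2} I estimate the convective piece by Hölder as $\|u_h^{n+1}\|_{L^6}\|\nabla w_h^{n+1}\|_{L^3}\|\Delta_h w_h^{n+1}\|$, then apply the 2D Sobolev embedding $\|u_h^{n+1}\|_{L^6}\le C\|\nabla u_h^{n+1}\|\le CC_2$ (Theorem~\ref{thm2}), the bound~\eqref{agmon3} $\|\nabla w_h^{n+1}\|_{L^3}\le C\|w_h^{n+1}\|^{1/3}\|\Delta_h w_h^{n+1}\|^{2/3}\le CC_1^{1/3}\|\Delta_h w_h^{n+1}\|^{2/3}$ (Theorem~\ref{thm1}), and Young's inequality with exponents $6/5$ and $6$, producing $\tfrac\nu8\|\Delta_h w_h^{n+1}\|^2+C\nu^{-5}C_2^6C_1^2$. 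The divergence piece is bounded by $\tfrac12\|\nabla\cdot u_h^{n+1}\|\,\|w_h^{n+1}\|_{L^\infty}\|\Delta_h w_h^{n+1}\|$ with $\|\nabla\cdot u_h^{n+1}\|\le\sqrt2\,C_2$ and the discrete Agmon inequality~\eqref{agmon2} $\|w_h^{n+1}\|_{L^\infty}\le C\|w_h^{n+1}\|^{1/2}\|\Delta_h w_h^{n+1}\|^{1/2}\le CC_1^{1/2}\|\Delta_h w_h^{n+1}\|^{1/2}$, and Young's inequality with exponents $4/3$ and $4$, producing $\tfrac\nu8\|\Delta_h w_h^{n+1}\|^2+C\nu^{-3}C_2^4C_1^2$. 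The two absorbed viscous contributions together with the one from the forcing still leave a nonnegative multiple of $\Delta t\,\|\Delta_h w_h^{n+1}\|^2$ on the left, and the constants $C\nu^{-5}C_2^6C_1^2$, $C\nu^{-3}C_2^4C_1^2$ pass unchanged through the recursion/geometric-sum step, giving~\eqref{est_w2}.

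For the sharper bound~\eqref{est_w2a} I replace the $L^\infty$ estimates by the 2D logarithmic finite element inverse inequality $\|v_h\|_{L^\infty}\le C(1+|\ln h|)^{1/2}\|\nabla v_h\|$ (see p.~124 in~\cite{BS08}): in the convective term $\|u_h^{n+1}\|_{L^\infty}\le C(1+|\ln h|)^{1/2}\|\nabla u_h^{n+1}\|\le C(1+|\ln h|)^{1/2}\widetilde{C}_2$, and in the divergence term $\|w_h^{n+1}\|_{L^\infty}\le C(1+|\ln h|)^{1/2}\|\nabla w_h^{n+1}\|$, so that each contribution, after Young's inequality, takes the form $C(1+|\ln h|)\nu^{-1}\widetilde{C}_2^2\|\nabla w_h^{n+1}\|^2+\tfrac\nu8\|\Delta_h w_h^{n+1}\|^2$. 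After the recursion one is left to control $\sum_k(1/\alpha)^{n-k}\Delta t\,\|\nabla w_h^{k+1}\|^2$, which is precisely what the $L^2$-vorticity estimate~\eqref{est_w1} bounds by $2\nu^{-1}C_1^2$, and this delivers the $C(1+|\ln h|)\nu^{-2}\widetilde{C}_2^2C_1^2$ term of~\eqref{est_w2a}. The main obstacle, relative to the energy and $L^2$ estimates where the nonlinearity vanished identically, is that the cubic-in-unknowns nonlinear term must now actually be controlled without a Gronwall loop; the device that makes this work is to peel off the top-order factor $\|\Delta_h w_h^{n+1}\|$ into the viscous term via Young's inequality and to absorb the remaining lower-order factors using the previously established uniform-in-time bounds $C_0,C_1,C_2,\widetilde{C}_2$ and the discrete Agmon/Sobolev inequalities, so that every leftover term is either nonnegative on the left or summable to a convergent geometric series.
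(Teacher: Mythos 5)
Your proposal is correct and follows essentially the same route as the paper: test with $\mp 2\Delta t\,\Delta_h w_h^{n+1}$, split $b^*$ into the convective and divergence pieces, bound them via $\|u_h\|_{L^6}\|\nabla w_h\|_{L^3}\|\Delta_h w_h\|$ with \eqref{agmon3} and via $\|\nabla u_h\|\|w_h\|_{L^\infty}\|\Delta_h w_h\|$ with \eqref{agmon2}, absorb with Young's inequality using the uniform bounds $C_1,C_2$, and close with Poincar\'e and the geometric-sum recursion; the logarithmic variant via the 2D inverse inequality and \eqref{est_w1} likewise matches the paper's second argument. The only cosmetic difference is that you make the sign of the test function and the bound $\|\nabla\cdot u_h^{n+1}\|\le \sqrt{2}\,C_2$ explicit, which the paper glosses over.
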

\begin{remark}\rm
The theorem above proves that the long-time vorticity solution is bounded in the $H^1$ norm only by the problem data, and similar to the $L^2$ bound, it is independent of the initial condition when $n$ is sufficiently large.\\
{Using $C_0=O(Re),\ C_1=O(Re)$ from Theorem \ref{thm1}, and $C_2=O(Re^5)$ from Theorem \ref{thm2}, 
the estimate \eqref{est_w2} gives $\| \nabla w_h^{n} \|\le O(Re^{19})$.  Estimate \eqref{est_w2a} gives $\| \nabla w_h^{n} \|\le O({ (1+|\ln h|)} Re^5)$, using additionally that $\widetilde{C}_2=O((1+|\ln h|)^{\frac12} Re^3)$ from Theorem 2.}
\end{remark}
\begin{proof}
Take $\chi_h=2\Delta t \Delta_h w_h^{n+1}$ in \eqref{bescheme3}, and majorize the forcing term using Cauchy-Schwarz and Young's inequalities to obtain
\begin{equation*}
 \| \nabla w_h^{n+1} \|^2 - \| \nabla w_h^n \|^2  + \frac32\nu\Delta t \| \Delta_h w_h^{n+1} \|^2 \le 2\nu^{-1}\Delta t \| f^{n+1} \|_{H^1}^2
 +2\Delta t  | b^*(u_h^{n+1},w_h^{n+1},\Delta_h w_h^{n+1}) |.
\end{equation*}
We bound the nonlinear term using Holder, Sobolev embeddings, discrete Agmon \eqref{agmon2} and discrete Sobolev inequality \eqref{agmon3}, and Theorems \ref{thm1} and \ref{thm2} to reveal
\begin{align*}
& \hspace{-0.5in} | b^*(u_h^{n+1},w_h^{n+1},\Delta_h w_h^{n+1}) | \\
& \le    | (u_h^{n+1} \cdot \nabla w_h^{n+1},\Delta_h w_h^{n+1}) |+  \frac12 | ( (\nabla \cdot u_h^{n+1}) w_h^{n+1},\Delta_h w_h^{n+1}) | \\
& \le    \|  u_h^{n+1}\|_{L^6} \| \nabla w_h^{n+1} \|_{L^3} \| \Delta_h w_h^{n+1} \| +  \frac12 \| \nabla u_h^{n+1} \| \|  w_h^{n+1} \|_{L^{\infty}} \| \Delta_h w_h^{n+1} \| \\
& \le   C C_2 \|  w_h^{n+1} \|^{1/3}  \| \Delta_h w_h^{n+1} \|^{5/3} + C C_2 \|  w_h^{n+1} \|^{1/2} \| \Delta_h w_h^{n+1} \|^{3/2} \\
& \le   C C_2 C_1^{1/3}  \| \Delta_h w_h^{n+1} \|^{5/3} + C C_2 C_1^{1/2} \| \Delta_h w_h^{n+1} \|^{3/2}.
\end{align*}
The generalized Young's inequality now provides the bound
\[
 | b^*(u_h^{n+1},w_h^{n+1},\Delta_h w_h^{n+1}) |
 \le
C\nu^{-5} C_2^6 C_1^2 + C\nu^{-3} C_2^4 C_1^2 + \frac{\nu}{4} \| \Delta_h w_h^{n+1} \|^2.
 \]
Combining the estimates above yields
\begin{equation*}
 \| \nabla w_h^{n+1} \|^2   + \nu\Delta t \| \Delta_h w_h^{n+1} \|^2 \le  \| \nabla w_h^n \|^2 + C\Delta t \left( \nu^{-1} \| f^{n+1} \|_{H^1} + \nu^{-5} C_2^6 C_1^2 + C\nu^{-3} C_2^4 C_1^2\right),
\end{equation*}
and after applying Poincare we get
\begin{equation*}
\left( 1+ { \lambda^{-2}}\nu\Delta t \right)  \| \nabla w_h^{n+1} \|^2   \le  \| \nabla w_h^n \|^2 + C\Delta t \left( \nu^{-1} \| f^{n+1} \|_{H^1} + \nu^{-5} C_2^6 C_1^2 + C\nu^{-3} C_2^4 C_1^2\right).
\end{equation*}
The remainder of the proof of \eqref{est_w2} follows analogous to the $H^1$ case for velocity.

Alternatively, we may bound the nonlinear terms as follows:
\begin{align*}
& \hspace{-0.5in} | b^*(u_h^{n+1},w_h^{n+1},\Delta_h w_h^{n+1}) | \\
& \le    | (u_h^{n+1} \cdot \nabla w_h^{n+1},\Delta_h w_h^{n+1}) |+  \frac12 | ( (\nabla \cdot u_h^{n+1}) w_h^{n+1},\Delta_h w_h^{n+1}) | \\
& \le    \|  u_h^{n+1}\|_{L^\infty} \| \nabla w_h^{n+1} \| \| \Delta_h w_h^{n+1} \| +  \frac12 \| \nabla u_h^{n+1} \| \|  w_h^{n+1} \|_{L^{\infty}} \| \Delta_h w_h^{n+1} \| \\
& \le   C{ (1+|\ln h|)}^{\frac12}\|\nabla u_h^{n+1}\| \| \nabla w_h^{n+1} \| \| \Delta_h w_h^{n+1} \| \le   C{ (1+|\ln h|)}^{\frac12} \widetilde{C}_2 \| \nabla w_h^{n+1} \| \| \Delta_h w_h^{n+1} \|\\
& \le   C \nu^{-1} { (1+|\ln h|)} \widetilde{C}_2^2 \| \nabla w_h^{n+1} \|^2 +  \frac{\nu}{4} \| \Delta_h w_h^{n+1} \|^2.
\end{align*}
To complete the proof of \eqref{est_w2a} we proceed as above and employ estimate \eqref{est_w1} for
the weighted sum of $\| \nabla w_h^{n+1} \|^2$ norms.
\end{proof}

\section{Second-order method}\label{s_BDF}

We consider next a velocity-vorticity scheme with BDF2 timestepping. The scheme decouples the update of velocity and vorticity on each time step. Similar to the backward Euler case, we shall prove that the velocity and vorticity are both unconditionally long-time stable in both the $L^2$ and $H^1$ norms, and the scalings of the stability estimates with $Re$ are the same as those from the backward Euler analysis.  However, the analysis is somewhat more technical here, and  a special norm is used to handle the time derivative terms.

\begin{algorithm} \label{bdf2}
Given the forcing $f$ and initial velocity $u_0$, set $u_h^{-1}=u_h^0$ to be the interpolant of $u_0$, and $w_h^{-1}=w_h^0$ the interpolant of the curl of $u_0$.  Select a timestep $\Delta t>0$, and for n=0,1,2,...\\
Step 1:  Find $(u_h^{n+1},{ P}_h^{n+1})\in (X_h,Q_h)$ satisfy for every $(v_h,q_h)\in (X_h,Q_h)$,
\begin{align}
\frac{1}{2\Delta t}\left( 3u_h^{n+1} - 4u_h^n + u_h^{n-1},v_h\right) + ( \left(2w_h^n - w_h^{n-1}\right) \times u_h^{n+1},v_h) & \nonumber \\
- ({ P}_h^{n+1},\nabla \cdot v_h) + \nu(\nabla u_h^{n+1},\nabla v_h) & =  (f^{n+1},v_h), \label{scheme1} \\
(\nabla \cdot u_h^{n+1},q_h) & =  0. \label{scheme2}
\end{align}
Step 2: Find $w_h^{n+1}\in Y_h$ satisfy for every $\chi_h\in Y_h$,
\begin{align*}
\frac{1}{2\Delta t}\left( 3w_h^{n+1} - 4w_h^n + w_h^{n-1},v_h\right)  + b^*( u_h^{n+1} , w_h^{n+1},\chi_h) + \nu(\nabla w_h^{n+1},\nabla \chi_h)  =  (\nabla \times f^{n+1},\chi_h). 
\end{align*}
\end{algorithm}

For the $2\times2$ matrix
\[
G := \left( \begin{array}{cc} 1/2 & -1 \\ -1 & 5/2 \end{array} \right),
\]
we introduce the  $G$-norm $\| \chi \|_{G} = (\chi,G\chi)$, $\chi$ is vector valued.
The $G$-norm is widely used in BDF2 analysis, see e.g. \cite{CGSW13,HW02}.
 The following property of the $G$-norm is well-known \cite{HW02}, { however for completeness
 we will provide a short proof.
 \begin{lemma}
Set $\chi^0 = [v^0,\ v^1]^T$ and $\chi^1 = [v^1,\ v^2]^T$.  Then
\[
\left( \frac32 v^2 -2v^1+ \frac12 v^0,v^2 \right) = \frac12 \left( \| \chi^1 \|_G^2 - \| \chi^0 \|_G^2 \right) + \frac{1}{4}\| v^2 - 2v^1 + v^0 \|^2.
\]
\end{lemma}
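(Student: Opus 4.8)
The identity is a purely algebraic one: both sides are quadratic forms in the three vectors $v^0,v^1,v^2$, so the plan is simply to expand everything by bilinearity and symmetry of $(\cdot,\cdot)$ and check that the coefficients match. No analysis or estimate is involved; the only ``difficulty'' is bookkeeping the coefficients of the six monomials $\|v^0\|^2,\|v^1\|^2,\|v^2\|^2,(v^0,v^1),(v^1,v^2),(v^0,v^2)$.

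First I would compute the two $G$-norms directly from $G=\begin{pmatrix}1/2 & -1\\ -1 & 5/2\end{pmatrix}$. With $\chi^1=[v^1,v^2]^T$ one gets $\|\chi^1\|_G^2=(\chi^1,G\chi^1)=\tfrac12\|v^1\|^2-2(v^1,v^2)+\tfrac52\|v^2\|^2$, and likewise $\|\chi^0\|_G^2=\tfrac12\|v^0\|^2-2(v^0,v^1)+\tfrac52\|v^1\|^2$. Subtracting and halving yields
\[
\tfrac12\bigl(\|\chi^1\|_G^2-\|\chi^0\|_G^2\bigr)=\tfrac54\|v^2\|^2-(v^1,v^2)-\|v^1\|^2+(v^0,v^1)-\tfrac14\|v^0\|^2 .
\]
Next I would expand the square $\tfrac14\|v^2-2v^1+v^0\|^2=\tfrac14\|v^2\|^2+\|v^1\|^2+\tfrac14\|v^0\|^2-(v^1,v^2)+\tfrac12(v^0,v^2)-(v^0,v^1)$.

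Adding the two expressions, the $\|v^1\|^2$, $\|v^0\|^2$ and $(v^0,v^1)$ contributions cancel, the $\|v^2\|^2$ terms combine to $\tfrac32\|v^2\|^2$, the $(v^1,v^2)$ terms combine to $-2(v^1,v^2)$, and one is left with $+\tfrac12(v^0,v^2)$. This is exactly $\bigl(\tfrac32 v^2-2v^1+\tfrac12 v^0,\,v^2\bigr)$, which proves the claim. I do not expect any genuine obstacle here; if a slicker route is wanted, one can instead note that $\tfrac32 v^2-2v^1+\tfrac12 v^0$ is the first component of $\widetilde G(\chi^1-\chi^0)+\text{(lower-order)}$ for a suitable matrix $\widetilde G$ with $\widetilde G+\widetilde G^T$ related to $G$, but the brute-force expansion above is shortest and completely routine.
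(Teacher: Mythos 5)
Your computation is correct: the two $G$-norms, their halved difference, and the expansion of $\tfrac14\|v^2-2v^1+v^0\|^2$ are all right, and the six monomial coefficients do combine to give exactly $\bigl(\tfrac32 v^2-2v^1+\tfrac12 v^0,\,v^2\bigr)$. The paper proves the same identity by a slightly different organization of the algebra: it starts from the scalar identity $a(3a-4b+c)=\tfrac12\bigl((a^2-b^2)+(2a-b)^2-(2b-c)^2+(a-2b+c)^2\bigr)$ and then verifies that the $G$-norm admits the sum-of-squares representation $\|[v^0,v^1]^T\|_G^2=\tfrac12\|v^1\|^2+\tfrac12\|2v^1-v^0\|^2$, so that the telescoping structure is visible term by term. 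Your brute-force coefficient matching is shorter and entirely adequate for the lemma as stated; what the paper's route buys is the explicit sum-of-squares form of $\|\cdot\|_G$, which makes the positive definiteness of $G$ (and hence the norm equivalence $C_l\|\chi\|_G\le\|\chi\|\le C_u\|\chi\|_G$ invoked immediately afterwards) transparent rather than something to be checked separately from the eigenvalues of $G$. Either proof is acceptable; there is no gap in yours.
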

\begin{proof}
Noting the algebraic identity $a(3a-4b+c)=\frac12\left( (a^2-b^2) + (2a-b)^2-(2b-c)^2 + (a-2b+c)^2\right)$, we can write
\begin{eqnarray*}
\frac12\left( 3v^2 - 4v^1 + v^0,v^2 \right) & = & \frac12 \left( \frac{\| v^2 \|^2 - \|v^1\|^2}{2} + \frac{ \| 2v^2 - v^1 \|^2 - \|2v^1 - v^0\|^2}{2} \right) + \frac{1}{4} \| v^2 - 2v^1 + v^0 \|^2  \\
& = & \frac12 \left( \frac{\| v^2 \|^2 + \| 2v^2 - v^1 \|^2}{2} - \frac{  \|2v^1 - v^0\|^2 + \|v^1\|^2 }{2} \right) + \frac{1}{4}\| v^2 - 2v^1 + v^0 \|^2.
\end{eqnarray*}
The term $\| \chi^0 \|_G^2$ can be decomposed as
\[
\| \chi^0 \|_G^2=(\chi^0,G\chi^0)
=
\left( \left( \begin{array}{c} v^0 \\ v^1 \end{array} \right), \left( \begin{array}{cc} 1/2 & -1 \\ -1 & 5/2 \end{array} \right)\left( \begin{array}{c} v^0 \\ v^1 \end{array} \right) \right),
\]
and with some arithmetic we get that
\[
\| \chi^0 \|_G^2
=
\int_{\Omega} \left( v^0 \left(\frac12 v^0 - v^1\right) + v^1\left(-v^0 + \frac52 v^1\right) \right)\ dx = \frac12 \| v^1 \|^2 + \frac12 \| 2v^1 - v^0 \|^2.
\]
A similar decomposition of $\| \chi^1\|$ completes the proof.
\end{proof}
}

It is also known that the $G$ norm is equivalent to the $L^2(\Omega)$ norm in the sense of there existing $C_l$ and $C_u$ such that
\[
C_l \| \chi \|_G \le \| \chi \| \le C_u \| \chi \|_G.
\]
Use of the $G$-norm and this norm equivalence will allow for a smoother analysis.

We begin our analysis with the long-time $L^2$ stability of velocity and vorticity.

\begin{theorem}[Long-time $L^2$ stability of velocity and vorticity] \label{LTL2}
Let $f\in L^{\infty}(0,\infty;V_h^{*})$ and $u_0\in H^1(\Omega)$.  Then for any $\Delta t>0$, solutions of Algorithm \ref{bdf2} satisfy for every positive integer $n$,
\begin{multline}
C_l^2 \left(  \| u_h^{n} \|^2 + \| u_h^{n-1} \|^2 \right)  + \frac{\nu\Delta t}{4} \| \nabla u_h^{n} \|^2
  \le
\left( \frac{1}{1+\alpha} \right)^n \left(  2C_u \| u_h^0 \|^2+ \frac{\nu\Delta t}{4}\| \nabla u_h^0 \|^2 \right)\\
+
\max \left(  2\Delta t, \frac{4}{\nu C_l^2} \right)
 \nu^{-1}  \| f \|_{L^{\infty}(0,\infty;V_h^*)}^2 =: C_4. \label{bdf2est1}
\end{multline}

If additionally $f\in L^{\infty}(0,\infty;L^2(\Omega))$ and $w_h^0 \in H^1(\Omega)$, then for any $\Delta t>0$, solutions of Algorithm \ref{bdf2} satisfy for every positive integer $n$,
\begin{multline}
C_l^2 \left(  \| w_h^{n} \|^2 + \| w_h^{n-1} \|^2 \right)  + \frac{\nu\Delta t}{4} \| \nabla w_h^{n} \|^2 \le
\left( \frac{1}{1+\alpha} \right)^n \left(  2C_u \| w_h^0 \|^2 + \frac{\nu\Delta t}{4}\| \nabla w_h^0 \|^2 \right)\\
+
\max \left(  2\Delta t, \frac{4}{\nu C_l^2} \right)
 \nu^{-1}  \| f \|_{L^{\infty}(0,\infty;L^2(\Omega))}^2 =: C_5. \label{bdf2est2}
\end{multline}
\end{theorem}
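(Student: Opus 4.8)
The plan is to mimic the backward Euler argument of Theorem \ref{thm1}, but now using the $G$-norm to absorb the three-term time derivative $\tfrac{1}{2\Delta t}(3u_h^{n+1}-4u_h^n+u_h^{n-1})$. First I would test \eqref{scheme1} with $v_h = 2\Delta t\,u_h^{n+1}$ and $q_h = P_h^{n+1}$, which kills the pressure term and the Lamb term $((2w_h^n-w_h^{n-1})\times u_h^{n+1},u_h^{n+1})=0$ (the cross product is orthogonal to $u_h^{n+1}$). Applying the $G$-norm identity of the Lemma with $v^0=u_h^{n-1}$, $v^1=u_h^n$, $v^2=u_h^{n+1}$, and setting $\chi^n:=[u_h^n,\ u_h^{n+1}]^T$, the left side becomes
\[
\|\chi^{n}\|_G^2 - \|\chi^{n-1}\|_G^2 + \tfrac12\|u_h^{n+1}-2u_h^n+u_h^{n-1}\|^2 + 2\nu\Delta t\|\nabla u_h^{n+1}\|^2 = 2\Delta t\,(f^{n+1},u_h^{n+1}).
\]
Then I would majorize the forcing term by $2\Delta t\,\|f^{n+1}\|_{V_h^*}\|\nabla u_h^{n+1}\| \le \nu\Delta t\|\nabla u_h^{n+1}\|^2 + \nu^{-1}\Delta t\|f^{n+1}\|_{V_h^*}^2$, drop the nonnegative squared-difference term, and use Poincaré on one copy of the remaining $\nu\Delta t\|\nabla u_h^{n+1}\|^2$ (keeping, say, half of it explicit so that the final estimate retains a $\tfrac{\nu\Delta t}{4}\|\nabla u_h^n\|^2$ term as in the statement).

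The key quantitative step is the geometric decay. After the above I would have something like
\[
\|\chi^n\|_G^2 + \tfrac{\nu\Delta t}{2}\|\nabla u_h^{n+1}\|^2 \le \|\chi^{n-1}\|_G^2 + \nu^{-1}\Delta t\|f^{n+1}\|_{V_h^*}^2 - \nu\Delta t\|\nabla u_h^{n+1}\|^2,
\]
and I want to convert the last (negative) term into $-\alpha$ times the quantity on the left. Using $\|\nabla u_h^{n+1}\|^2 \ge \lambda^{-2}\|u_h^{n+1}\|^2 \ge \lambda^{-2}C_u^{-2}\|\chi^n\|_G^2$ (and also bounding $\|\chi^n\|_G^2$ below by $C_l^2(\|u_h^n\|^2+\|u_h^{n+1}\|^2)$ for the final form), the term $\nu\Delta t\|\nabla u_h^{n+1}\|^2$ dominates a constant multiple of both $\|\chi^n\|_G^2$ and $\tfrac{\nu\Delta t}{4}\|\nabla u_h^{n+1}\|^2$; choosing $\alpha$ as the appropriate constant (essentially $\alpha \sim \min(\nu\Delta t\lambda^{-2}C_u^{-2},\ \tfrac12)$, to be pinned down to match $1+\alpha$ in the statement) yields
\[
(1+\alpha)\Big(\|\chi^n\|_G^2 + \tfrac{\nu\Delta t}{4}\|\nabla u_h^{n+1}\|^2\Big) \le \|\chi^{n-1}\|_G^2 + \tfrac{\nu\Delta t}{4}\|\nabla u_h^n\|^2 + \nu^{-1}\Delta t\|f^{n+1}\|_{V_h^*}^2.
\]
Dividing by $(1+\alpha)$, iterating this recursion from step $n$ down to step $0$ (using $u_h^{-1}=u_h^0$, so $\|\chi^{-1}\|_G^2 \le 2C_u\|u_h^0\|^2$ and $\tfrac{\nu\Delta t}{4}\|\nabla u_h^{-1}\|^2 = \tfrac{\nu\Delta t}{4}\|\nabla u_h^0\|^2$), and summing the resulting geometric series $\sum_{k}(1+\alpha)^{-k}\nu^{-1}\Delta t\|f\|_{L^\infty(0,\infty;V_h^*)}^2 \le \tfrac{1+\alpha}{\alpha}\nu^{-1}\Delta t\|f\|^2$ gives the stated bound; the $\max(2\Delta t, \tfrac{4}{\nu C_l^2})$ prefactor arises exactly because $\tfrac{1+\alpha}{\alpha}\Delta t$ simplifies to (a constant times) this max depending on which term in the definition of $\alpha$ is active. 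Finally, lower-bounding $\|\chi^n\|_G^2$ by $C_l^2(\|u_h^n\|^2+\|u_h^{n-1}\|^2)$ — wait, by the index convention it is $C_l^2(\|u_h^n\|^2+\|u_h^{n+1}\|^2)$, so one re-indexes the recursion so the surviving term is at level $n$ — produces \eqref{bdf2est1}.

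For the vorticity estimate \eqref{bdf2est2} the argument is identical: test Step 2 with $\chi_h = 2\Delta t\,w_h^{n+1}$, use $b^*(u_h^{n+1},w_h^{n+1},w_h^{n+1})=0$ (the skew-symmetry property noted after the definition of $b^*$), integrate by parts in the forcing term $(\nabla\times f^{n+1},w_h^{n+1})$ and bound it by $\|f^{n+1}\|\|\nabla w_h^{n+1}\|$, and run the same $G$-norm/Poincaré/geometric-decay machine; the only change is that the forcing appears in $L^\infty(0,\infty;L^2(\Omega))$ rather than $L^\infty(0,\infty;V_h^*)$. I expect the main obstacle to be purely bookkeeping: correctly tracking the index shift in the $G$-norm pair $\chi^n=[u_h^n,u_h^{n+1}]^T$ through the recursion (so that the left-hand side of \eqref{bdf2est1} really ends up at the right level), and choosing the split of the viscous term and the precise value of $\alpha$ so that the clean constants $1+\alpha$ and $\max(2\Delta t,\tfrac{4}{\nu C_l^2})$ come out as written — there is no analytical difficulty, since no Gronwall inequality is needed, only the sign of the dropped squared-difference term and the norm equivalence $C_l\|\chi\|_G \le \|\chi\| \le C_u\|\chi\|_G$.
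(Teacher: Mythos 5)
Your proposal is correct and follows essentially the same route as the paper's proof: test with $v_h=2\Delta t\,u_h^{n+1}$ (resp. $\chi_h=2\Delta t\,w_h^{n+1}$), apply the $G$-norm telescoping identity, bound the forcing by Young's inequality in the dual (resp. $L^2$) norm, split the surviving viscous term so that a $\tfrac{\nu\Delta t}{4}\|\nabla u_h^{n}\|^2$ piece rides along in the decaying quantity, absorb via Poincar\'e and the $G$-norm equivalence into a factor $(1+\alpha)$ with $\alpha=\min\{1/2,\tfrac{\nu\Delta t C_l^2}{4}\}$, and sum the geometric series to obtain the $\max\{2\Delta t,\tfrac{4}{\nu C_l^2}\}$ prefactor. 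The only differences are an index shift in the definition of $\chi^n$ and immaterial constant bookkeeping, both of which you flag yourself.
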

\begin{remark}\rm
A more technical analysis can be made, similar to the backward Euler case, that includes the terms $\frac{\nu}{2} \sum_{k=0}^{n-1} \alpha^{k-n} \| \nabla u_h^{k+1} \|^2$ and $\frac{\nu}{2} \sum_{k=0}^{n-1} \alpha^{k-n} \| \nabla w_h^{k+1} \|^2$ on the left hand sides of \eqref{bdf2est1} and \eqref{bdf2est2}, respectively.
\end{remark}
\begin{proof}
Choose $v_h=2\Delta t u_h^{n+1}$ in \eqref{scheme1}, which vanishes the nonlinear and pressure terms, and then upper bound the forcing term just as in the backward Euler case to get
\begin{equation}
 \| \chi^{n+1} \|_G^2 - \| \chi^n \|_G^2
+ \frac{1}{2} \| u_h^{n+1} - 2u_h^n + u_h^{n-1} \|^2
+ \nu\Delta t \| \nabla u_h^{n+1} \|^2
\le
\nu^{-1} \Delta t \| f^{n+1} \|_{V_h^*}^2,  \label{a1}
\end{equation}
where  $\chi^{n+1} = [u_h^{n},\ u_h^{n+1}]^T$ and $\chi^{n} = [u_h^{n-1},\ u_h^{n}]^T$.  Dropping the { third} term on the left-hand side, and adding $\frac{\nu\Delta t}{4}\| \nabla u_h^n \|^2$ to both sides produces
\begin{multline}
\left(  \| \chi^{n+1} \|_G^2  + \frac{\nu\Delta t}{4} \| \nabla u_h^{n+1} \|^2 \right)
 + \frac{\nu\Delta t}{4}\left( \| \nabla u_h^{n+1} \|^2 + \| \nabla u_h^n \|^2 \right)
 + \frac{\nu\Delta t}{2} \| \nabla u_h^{n+1} \|^2
 \\ \le
\left( \| \chi^n \|_G^2 + \frac{\nu\Delta t}{4}\| \nabla u_h^n \|^2 \right)
+ \nu^{-1} \Delta t \| f \|_{L^{\infty}(0,\infty;V_h^*)}^2.  \label{a2}
\end{multline}
Using the Poincare inequality and then the equivalence of the $G$-norm with the $L^2$ norm, we have that
\begin{equation*}
 \frac{\nu\Delta t}{4}\left( \| \nabla u_h^{n+1} \|^2 + \| \nabla u_h^n \|^2 \right)
 \ge   \frac{\nu { \lambda^{-2}} \Delta t}{4}\left( \| u_h^{n+1} \|^2 + \|  u_h^n \|^2 \right)
=  \frac{\nu\Delta t}{4} \| \chi^{n+1} \|^2
\ge  \frac{\nu\Delta t C_l^2}{4} \| \chi^{n+1} \|_G^2,
\end{equation*}
and thus setting $\alpha:= \min \{ 1/2, \frac{\nu\Delta t C_l^2}{4} \}$, it holds that
\begin{equation}
\frac{\nu\Delta t}{4}\left( \| \nabla u_h^{n+1} \|^2 + \| \nabla u_h^n \|^2 \right)
+
\frac{\nu\Delta t}{2} \| \nabla u_h^{n+1} \|^2
\ge
\alpha \left(  \| \chi^{n+1} \|_G^2  + \frac{\nu\Delta t}{4} \| \nabla u_h^{n+1} \|^2 \right). \label{a4}
\end{equation}
Combining \eqref{a4} and \eqref{a2} yields
\begin{equation*}
(1+ \alpha) \left(  \| \chi^{n+1} \|_G^2  + \frac{\nu\Delta t}{4} \| \nabla u_h^{n+1} \|^2 \right)
  \le
\left( \| \chi^n \|_G^2 + \frac{\nu\Delta t}{4}\| \nabla u_h^n \|^2 \right)
+ \nu^{-1} \Delta t \| f \|_{L^{\infty}(0,\infty;V_h^*)}^2,  
\end{equation*}
which immediately implies that
\begin{multline}
 \| \chi^{n} \|_G^2  + \frac{\nu\Delta t}{4} \| \nabla u_h^{n} \|^2
  \le
\left( \frac{1}{1+\alpha} \right)^n \left( \| \chi^0 \|_G^2 + \frac{\nu\Delta t}{4}\| \nabla u_h^0 \|^2 \right)\\
+  \left( \frac{1}{1+\alpha} + ... +\left( \frac{1}{1+\alpha} \right) ^{n} \right) \nu^{-1} \Delta t \| f^{n+1} \|_{L^{\infty}(0,\infty;V_h^*)}^2.  \label{a5}
\end{multline}
Since $\alpha>0$,
\[
 \frac{1}{1+\alpha} + ... + \left( \frac{1}{1+\alpha} \right) ^{n}  = \frac{ \frac{1}{1+\alpha} - \left( \frac{1}{1+\alpha}  \right)^{n+1} }{1 - \frac{1}{1+\alpha} }
 \le
 \frac{ \frac{1}{1+\alpha} }{ \frac{\alpha}{1+\alpha} } = \frac{1}{\alpha} = \max \{ 2, \frac{4}{\nu\Delta t C_l^2} \},
 \]
and thus
\begin{multline}
\| \chi^{n} \|_G^2  + \frac{\nu\Delta t}{4} \| \nabla u_h^{n} \|^2
  \le \\
\left( \frac{1}{1+\alpha} \right)^n \left( \| \chi^0 \|_G^2 + \frac{\nu\Delta t}{4}\| \nabla u_h^0 \|^2 \right)
+
\max \{ 2\Delta t, \frac{4}{\nu C_l^2} \}
 \nu^{-1}  \| f \|_{L^{\infty}(0,\infty;V_h^*)}^2.  \label{a6}
\end{multline}
Now using the equivalence of norms for the $G$ norm and $L^2$ norm of $\chi$ completes the velocity proof.

The proof for vorticity follows identically, modulo a higher order norm on the forcing, after taking the test function to be $w_h^{n+1}$.
\end{proof}

We prove next the unconditional long-time $H^1$ stability of velocity.

\begin{theorem}[Long-time $H^1$ stability of velocity] \label{LTH1}
Let $f\in L^{\infty}(0,\infty;L^2(\Omega))$, $u_0\in H^2(\Omega)$, and set $\alpha:= \min \{ 1/2, \frac{\nu\Delta t C_l^2}{4} \}$.  Then for any $\Delta t>0$, solutions of Algorithm \ref{bdf2} satisfy for every positive integer $n$,
\begin{multline}
 C_l^2 \left(  \| \nabla u_h^{n} \|^2 + \| \nabla u_h^{n-1} \|^2 \right)  + \frac{\nu\Delta t}{4} \| A_h u_h^{n} \|^2
  \le
\left( \frac{1}{1+\alpha} \right)^n \left(  2C_u \| \nabla u_h^0 \|^2 + \frac{\nu\Delta t}{4}\| A_h u_h^0 \|^2 \right)
 \\ +
\max \left(  2\Delta t, \frac{4}{\nu C_l^2} \right)
\left( \nu^{-1}  \| f \|_{L^{\infty}(0,\infty;L^2(\Omega))}^2 + C\nu^{-3} C_5^4 C_4^2 \right) =: C_6. \label{bdf2est3}
\end{multline}
\end{theorem}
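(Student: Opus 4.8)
The plan is to obtain the $H^1$ estimate by testing the momentum equation with the discrete Stokes operator applied to the new velocity, exactly paralleling the backward-Euler $H^1$ argument (Theorem~\ref{thm2}) and the BDF2 $L^2$ argument (Theorem~\ref{LTL2}). Concretely, I would set $v_h=2\Delta t\,A_h u_h^{n+1}$ in \eqref{scheme1}. Since $A_h u_h^{n+1}\in V_h$, the Bernoulli-pressure term drops; the viscous term becomes $2\nu\Delta t\,\|A_h u_h^{n+1}\|^2$ via the defining property $(A_h u_h^{n+1},v_h)=(\nabla u_h^{n+1},\nabla v_h)$ taken with $v_h=A_h u_h^{n+1}$; and, using $u_h^{n+1},u_h^n,u_h^{n-1}\in V_h$ together with the symmetry $(A_h\phi,\psi)=(\nabla\phi,\nabla\psi)=(A_h\psi,\phi)$ on $V_h$, the BDF2 difference quotient reduces to $(\nabla(3u_h^{n+1}-4u_h^n+u_h^{n-1}),\nabla u_h^{n+1})$.

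The decisive observation is that this last quantity is exactly the object handled by the $G$-norm Lemma, once that lemma is read in the $A_h$-inner product $(\phi,\psi)_{A_h}:=(A_h\phi,\psi)=(\nabla\phi,\nabla\psi)$ rather than in $L^2$ --- equivalently, with each scalar iterate $v^j$ in the lemma replaced by the gradient field $\nabla u_h^j$. This produces
\[
(\nabla(3u_h^{n+1}-4u_h^n+u_h^{n-1}),\nabla u_h^{n+1}) = \|\Xi^{n+1}\|_G^2-\|\Xi^{n}\|_G^2 + \tfrac12\|\nabla(u_h^{n+1}-2u_h^n+u_h^{n-1})\|^2,
\]
where $\Xi^{n+1}:=[\nabla u_h^{n},\ \nabla u_h^{n+1}]^T$ and $\Xi^{n}:=[\nabla u_h^{n-1},\ \nabla u_h^{n}]^T$. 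The Lemma's proof is the polarization of a scalar polynomial identity, so it carries over verbatim to any inner product space, and the $G$-norm/$L^2$-norm equivalence $C_l\|\chi\|_G\le\|\chi\|\le C_u\|\chi\|_G$ (pure linear algebra in the matrix $G$) likewise applies componentwise to pairs of $L^2$ vector fields.

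I would then majorize the right-hand side. For the forcing, Cauchy--Schwarz and Young give $2\Delta t(f^{n+1},A_h u_h^{n+1})\le 2\nu^{-1}\Delta t\|f^{n+1}\|^2+\tfrac{\nu\Delta t}{2}\|A_h u_h^{n+1}\|^2$. For the Lamb term, using the pointwise identity $|(2w_h^n-w_h^{n-1})\times u_h^{n+1}|=|2w_h^n-w_h^{n-1}|\,|u_h^{n+1}|$, H\"older's inequality, the discrete Agmon inequality \eqref{agmon}, and Young's inequality, one gets
\[
2\Delta t\,|((2w_h^n-w_h^{n-1})\times u_h^{n+1},A_h u_h^{n+1})|\le C\Delta t\,\nu^{-3}\|2w_h^n-w_h^{n-1}\|^4\|u_h^{n+1}\|^2 + \tfrac{\nu\Delta t}{4}\|A_h u_h^{n+1}\|^2,
\]
and then Theorem~\ref{LTL2}, which controls the velocity and vorticity $L^2$ norms on two consecutive time levels, reduces $\|2w_h^n-w_h^{n-1}\|^4\|u_h^{n+1}\|^2$ to a constant depending only on the data; this is the source of the term $C\nu^{-3}C_5^4C_4^2$ in \eqref{bdf2est3}.

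It remains to assemble the recursion as in the proof of Theorem~\ref{LTL2}. Absorb the $\tfrac{\nu\Delta t}{2}$ and $\tfrac{\nu\Delta t}{4}$ pieces into the viscous term, discard the nonnegative term $\tfrac12\|\nabla(u_h^{n+1}-2u_h^n+u_h^{n-1})\|^2$, add $\tfrac{\nu\Delta t}{4}\|A_h u_h^n\|^2$ to both sides, and regroup; then the discrete Poincar\'e bound $\|\nabla v_h\|\le\lambda\|A_h v_h\|$ and the $G$-norm equivalence produce a factor $1+\alpha$, $\alpha=\min\{1/2,\ \nu\Delta t\,C_l^2/4\}$, in front of $D_{n+1}:=\|\Xi^{n+1}\|_G^2+\tfrac{\nu\Delta t}{4}\|A_h u_h^{n+1}\|^2$, i.e. $(1+\alpha)D_{n+1}\le D_n+R$ with $R=\Delta t(2\nu^{-1}\|f\|_{L^\infty(0,\infty;L^2)}^2+C\nu^{-3}C_5^4C_4^2)$. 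Recursive substitution, the geometric sum $\sum_{k\ge1}(1+\alpha)^{-k}\le\alpha^{-1}=\max\{2,\ 4/(\nu\Delta t\,C_l^2)\}$, and a final use of the $G$-norm equivalence together with the startup convention $u_h^{-1}=u_h^0$ then yield \eqref{bdf2est3}; the hypothesis $u_0\in H^2(\Omega)$ enters only to make $\|A_h u_h^0\|$, hence $D_0$, finite. I expect the only genuine obstacle to be this reinterpretation of the BDF2 time-derivative term: one must recognize that testing with $A_h u_h^{n+1}$ forces the discrete energy balance into the $A_h$-inner product, so the $G$-norm Lemma and the norm equivalence must be applied to the pairs $\Xi^n$ of gradient fields rather than to the scalar iterates, with the startup step $u_h^{-1}=u_h^0\in V_h$ handled exactly as in Theorem~\ref{LTL2}; the nonlinear bound, the Poincar\'e/absorption bookkeeping, and the geometric summation are then routine repetitions of the earlier proofs.
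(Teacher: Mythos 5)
Your proposal is correct and follows essentially the same route as the paper: the authors also test with $v_h=2\Delta t\,A_h u_h^{n+1}$, apply the $G$-norm lemma to the pairs $\chi^{n+1}=[A_h^{1/2}u_h^{n},\,A_h^{1/2}u_h^{n+1}]^T$ (which is exactly your reading of the lemma in the $A_h$-inner product, since $\|A_h^{1/2}u_h^j\|=\|\nabla u_h^j\|$), bound the Lamb term via discrete Agmon, Young, and Theorem~\ref{LTL2} to produce $C\nu^{-3}C_5^4C_4^2$, and then repeat the geometric recursion from the $L^2$ proof before invoking the $G$-norm equivalence. The only differences are immaterial constant allocations in the Young's-inequality absorptions.
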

\begin{remark}\rm
Similar to the backward Euler case, the long-time $H^1$ stability bound for velocity gives $\| \nabla u_h^{n} \| \le O(Re^5)$.  If we instead bounded the nonlinear term as in the backward Euler case via
\[
|  2\Delta t ( \left(2w_h^n - w_h^{n-1}\right) \times u_h^{n+1},A_h u_h^{n+1})  |
 \le
C{\Delta t(1+}| \ln h |) \nu^{-1} C_5^2 C_4^2 { + \frac{\nu\Delta t}{2} \| A_h u_h^{n+1} \|^2},
\]
then we can get instead $\| \nabla u_h^{n} \| \le O( {(1+}| \ln h |)^{1/2} Re^3 )$.
\end{remark}

\begin{proof}
Choose $v_h=2\Delta t A_h u_h^{n+1}$ in \eqref{scheme1}, which vanishes the pressure terms, and then upper bound the forcing term  to get
\begin{multline}
\left( \| \chi^{n+1} \|_G^2 - \| \chi^n \|_G^2  \right)
+ \frac{1}{2} \| \nabla ( u_h^{n+1} - 2u_h^n + u_h^{n-1} ) \|^2
+ \nu\Delta t \| A_h u_h^{n+1} \|^2 \\
\le
\nu^{-1} \Delta t \| f^{n+1} \|^2 - 2\Delta t ( \left(2w_h^n - w_h^{n-1}\right) \times u_h^{n+1},A_h u_h^{n+1}) ,  \label{bb1}
\end{multline}
where  $\chi^{n+1} = [A_h^{1/2} u_h^{n},\ A_h^{1/2} u_h^{n+1}]^T$ and $\chi^{n} = [A_h^{1/2} u_h^{n-1},\ A_h^{1/2} u_h^{n}]^T$.
The last term on the right hand side is estimated using the same technique as in the backward Euler case from Section~\ref{s_Euler}, and then applying the $L^2$ stability estimates (which is from Theorem \ref{LTL2} in this case):
\begin{align}
|  2\Delta t ( \left(2w_h^n - w_h^{n-1}\right) \times u_h^{n+1},A_h u_h^{n+1})  |
 &\le
C \Delta t \nu^{-3} \left( \| w_h^{n} \|^4 + \| w_h^{n-1} \|^4 \right) \| u_h^{n+1} \|^2 + \frac{\nu\Delta t}{2} \| A_h u_h^{n+1} \|^2 \nonumber \\
& \le  C \Delta t \nu^{-3} C_5^4 C_4^2 + \frac{\nu\Delta t}{2} \| A_h u_h^{n+1} \|^2.
\end{align}
Combining this with \eqref{bb1}, dropping the second term on the left-hand side, and adding $\frac{\nu\Delta t}{4}\| A_h u_h^n \|^2$ to both sides produces
\begin{multline}
\left(  \| \chi^{n+1} \|_G^2  + \frac{\nu\Delta t}{4} \| A_h u_h^{n+1} \|^2 \right)
 + \frac{\nu\Delta t}{4}\left( \| A_h u_h^{n+1} \|^2 + \| A_h u_h^n \|^2 \right)
 + \frac{\nu\Delta t}{2} \| A_h u_h^{n+1} \|^2
 \\ \le
\left( \| \chi^n \|_G^2 + \frac{\nu\Delta t}{4}\| A_h u_h^n \|^2 \right)
+ \Delta t \left( \nu^{-1} \| f\|_{L^{\infty}(0,\infty;L^2(\Omega))}^2 + C\nu^{-3} C_5^4 C_4^2 \right).  \label{bb2}
\end{multline}
From here, setting $\alpha:= \min \{ 1/2, \frac{\nu\Delta t C_l^2}{4} \}$ and taking analogous steps as in the proof of the long-time $L^2$ estimate (starting from \eqref{a2}) provides us with
\begin{multline}
 \| \chi^{n} \|_G^2  + \frac{\nu\Delta t}{4} \| A_h u_h^{n} \|^2
  \le \\
\left( \frac{1}{1+\alpha} \right)^n \left( \| \chi^0 \|_G^2 + \frac{\nu\Delta t}{4}\| A_h u_h^0 \|^2 \right)
+
\max \{ 2\Delta t, \frac{4}{\nu C_l^2} \}
\left(  \nu^{-1}  \| f \|_{L^{\infty}(0,\infty;L^2(\Omega))}^2 + C \nu^{-3} C_5^4 C_4^2 \right).  \label{bb6}
\end{multline}
Finally, applying the norm equivalence for the G-norm finishes the proof.
\end{proof}

We can now prove the unconditional long-time $H^1$ stability of the vorticity.

\begin{theorem}[Long-time $H^1$ stability of vorticity] 
Let $f\in L^{\infty}(0,\infty;H^1(\Omega))$, $u_0\in H^3(\Omega)$, and set $\alpha:= \min \{ 1/2, \frac{\nu\Delta t C_l^2}{4} \}$.  Then for any $\Delta t>0$, solutions of Algorithm \ref{bdf2} satisfy for every positive integer $n$,
\begin{multline}
C_l^2 \left(  \| \nabla w_h^{n} \|^2 + \| \nabla w_h^{n-1} \|^2 \right)  + \frac{\nu\Delta t}{4} \| \Delta_h w_h^{n} \|^2
  \le
\left( \frac{1}{1+\alpha} \right)^n \left(  2C_u \| \nabla w_h^0 \|^2 + \frac{\nu\Delta t}{4}\| \Delta_h w_h^0 \|^2 \right)
 \\ +
\max \left(  2\Delta t, \frac{4}{\nu C_l^2} \right)
\left( \nu^{-1}  \| f \|_{L^{\infty}(0,\infty;H^1(\Omega))}^2 + C \nu^{-5} C_6^6 C_5^2 + \nu^{-3}C_6^4 C_5^2 \right) =: C_7. \label{bdf2est4}
\end{multline}
\end{theorem}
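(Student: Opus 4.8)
The strategy is to repeat, for the vorticity step of Algorithm~\ref{bdf2}, the argument behind the backward Euler bound \eqref{est_w2} of Theorem~\ref{thm3}, now packaged with the same $G$-norm/BDF2 bookkeeping used to prove \eqref{bdf2est2} and \eqref{bdf2est3}, so that no discrete Gronwall lemma enters and the $Re$-dependence stays polynomial. First I would test the vorticity equation of Algorithm~\ref{bdf2} with $\chi_h = 2\Delta t\,\Delta_h w_h^{n+1}$. Since $-\Delta_h$ is self-adjoint and positive definite on $Y_h$ with $\|(-\Delta_h)^{1/2}z_h\| = \|\nabla z_h\|$, writing $\zeta^k := (-\Delta_h)^{1/2}w_h^k$ turns the time-difference term into $(3w_h^{n+1}-4w_h^n+w_h^{n-1},\Delta_h w_h^{n+1}) = -(3\zeta^{n+1}-4\zeta^{n}+\zeta^{n-1},\zeta^{n+1})$, and the $G$-norm identity of Section~\ref{s_BDF} rewrites this as $-\|\chi^{n+1}\|_G^2 + \|\chi^n\|_G^2 - \tfrac12\|\nabla(w_h^{n+1}-2w_h^n+w_h^{n-1})\|^2$ with $\chi^{n+1}=[\zeta^n,\zeta^{n+1}]^T$, $\chi^n=[\zeta^{n-1},\zeta^n]^T$. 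The viscous term contributes $2\nu\Delta t\|\Delta_h w_h^{n+1}\|^2$; bounding the forcing by Cauchy--Schwarz and Young (absorbing $\tfrac{\nu\Delta t}{2}\|\Delta_h w_h^{n+1}\|^2$) and multiplying through by $-1$ leaves
\begin{multline*}
\|\chi^{n+1}\|_G^2 - \|\chi^n\|_G^2 + \tfrac12\|\nabla(w_h^{n+1}-2w_h^n+w_h^{n-1})\|^2 + \tfrac32\nu\Delta t\|\Delta_h w_h^{n+1}\|^2 \\
\le 2\nu^{-1}\Delta t\,\|f^{n+1}\|_{H^1}^2 + 2\Delta t\,|b^*(u_h^{n+1},w_h^{n+1},\Delta_h w_h^{n+1})|.
\end{multline*}

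Next I would bound the trilinear term line by line as in the proof of Theorem~\ref{thm3}: split $b^*$, apply H\"older's inequality, the $L^6$ Sobolev embedding, the discrete Agmon inequality \eqref{agmon2} and the discrete Sobolev inequality \eqref{agmon3}, and then the generalized Young inequality --- the only change being that $\|\nabla u_h^{n+1}\|$ and $\|u_h^{n+1}\|_{L^6}$ are now controlled through the $H^1$ velocity bound \eqref{bdf2est3}, and $\|w_h^{n+1}\|$ through the $L^2$ vorticity bound \eqref{bdf2est2}, in place of \eqref{est_u2} and \eqref{est_w1}. This yields $|b^*(u_h^{n+1},w_h^{n+1},\Delta_h w_h^{n+1})| \le C\nu^{-5}C_6^6C_5^2 + C\nu^{-3}C_6^4C_5^2 + \tfrac\nu4\|\Delta_h w_h^{n+1}\|^2$.

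After absorbing the last term, dropping the nonnegative second-difference term, adding $\tfrac{\nu\Delta t}{4}\|\Delta_h w_h^n\|^2$ to both sides and regrouping the viscous part as $\tfrac{\nu\Delta t}{4}(\|\Delta_h w_h^{n+1}\|^2+\|\Delta_h w_h^n\|^2) + \tfrac{\nu\Delta t}{2}\|\Delta_h w_h^{n+1}\|^2$, I would use the discrete Poincar\'e inequality $\|\nabla z_h\|\le\lambda\|\Delta_h z_h\|$ and the $G$-norm equivalence $C_l\|\chi\|_G\le\|\chi\|$ to bound that group below by $\alpha\bigl(\|\chi^{n+1}\|_G^2+\tfrac{\nu\Delta t}{4}\|\Delta_h w_h^{n+1}\|^2\bigr)$, with $\alpha = \min\{1/2,\tfrac{\nu\Delta t C_l^2}{4}\}$, exactly as in the proofs of \eqref{bdf2est2}, \eqref{bdf2est3}. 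This gives the one-step contraction
\begin{multline*}
(1+\alpha)\Bigl(\|\chi^{n+1}\|_G^2+\tfrac{\nu\Delta t}{4}\|\Delta_h w_h^{n+1}\|^2\Bigr) \\
\le \|\chi^n\|_G^2+\tfrac{\nu\Delta t}{4}\|\Delta_h w_h^n\|^2 + C\Delta t\bigl(\nu^{-1}\|f\|_{L^\infty(0,\infty;H^1(\Omega))}^2 + \nu^{-5}C_6^6C_5^2 + \nu^{-3}C_6^4C_5^2\bigr).
\end{multline*}
Iterating down to $n=0$, where $w_h^{-1}=w_h^0$ gives $\|\chi^0\|_G^2=\|\nabla w_h^0\|^2$, summing the geometric series $\sum_{k=1}^{n}(1+\alpha)^{-k}\le\alpha^{-1}=\max\{2,\tfrac{4}{\nu\Delta t C_l^2}\}$, multiplying by $\Delta t$, and applying the $G$-norm equivalence once more yields \eqref{bdf2est4}; the hypothesis $u_0\in H^3(\Omega)$ is needed only to keep the initial term $\tfrac{\nu\Delta t}{4}\|\Delta_h w_h^0\|^2$ finite uniformly in $h$.

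I expect the main difficulty to be bookkeeping rather than conceptual: one must keep the sign from testing with the negative operator $\Delta_h$ straight, so that the $G$-norm identity gets applied to $(-\Delta_h)^{1/2}w_h^k$ and not to $w_h^k$ itself, and one must carry the auxiliary term $\tfrac{\nu\Delta t}{4}\|\Delta_h w_h^n\|^2$ through the recursion so that the geometric decay factor $(1+\alpha)$ --- and with it the Gronwall-free, polynomial-in-$Re$ estimate --- actually materializes. Once \eqref{bdf2est2} and \eqref{bdf2est3} are available, the trilinear estimate is essentially a transcription of the corresponding step in Theorem~\ref{thm3}, so there is no genuinely new analytic input beyond combining the pieces.
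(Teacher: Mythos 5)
Your proposal is correct and follows essentially the same route as the paper: the paper likewise tests with $-2\Delta t\,\Delta_h w_h^{n+1}$, applies the $G$-norm identity to $\chi^{n+1}=[(-\Delta_h)^{1/2}w_h^{n},(-\Delta_h)^{1/2}w_h^{n+1}]^T$, transcribes the trilinear bound from the backward Euler Theorem \ref{thm3} with $C_5$ and $C_6$ in place of $C_1$ and $C_2$, and closes with the same $(1+\alpha)$ contraction and geometric-series argument used for \eqref{bdf2est3}. The only differences are immaterial bookkeeping choices (your sign convention for the test function and the exact Young's-inequality splitting of the forcing term).
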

\begin{remark}\rm
Similar to the backward Euler case, we find that $\| \nabla w_h^n \| \le O(Re^{19})$.  However, different estimates of the nonlinear terms (i.e., using an inverse inequality as in the backward Euler case) can be used to find $\| \nabla w_h^n \| \le O({(1+}| \ln h |^{3/2} Re^{5})$.
\end{remark}

\begin{proof}
Begin by choosing $\chi_h=-2\Delta t \Delta_h w_h^{n+1}$ to get
\begin{multline}
\left( \| \chi^{n+1} \|_G^2 - \| \chi^n \|_G^2  \right)
+ \frac{1}{2} \| \nabla ( w_h^{n+1} - 2w_h^n + w_h^{n-1} ) \|^2
+ \nu\Delta t \| \Delta_h w_h^{n+1} \|^2 \\
\le
\nu^{-1} \Delta t \| \nabla \times f^{n+1} \|^2 + 2\Delta t b^*(u_h^{n+1},w_h^{n+1},\Delta_h w_h^{n+1}),  \label{cc1}
\end{multline}
where  $\chi^{n+1} = [(-\Delta_h)^{1/2} w_h^{n},\ (-\Delta_h)^{1/2} w_h^{n+1}]^T$ and $\chi^{n} = [(-\Delta_h)^{1/2} w_h^{n-1},\ (-\Delta_h)^{1/2} w_h^{n}]^T$.  Upper bounding the nonlinear term exactly as in the backward Euler case, and then using the long-time estimates proven above for the BDF2 scheme gives
\begin{align*}
| 2\Delta t b^*(u_h^{n+1},w_h^{n+1},\Delta_h w_h^{n+1}) |
& \le  C \nu^{-5} C_6^6 C_5^2 + C\nu^{-3}C_6^4 C_5^2 + \frac{\nu\Delta t}{4} \| \Delta_h w_h^{n+1} \|^2,
\end{align*}
and thus using this and dropping the second term on the left side of \eqref{cc1} yields
\begin{multline}
\left( \| \chi^{n+1} \|_G^2 - \| \chi^n \|_G^2  \right)
+ \frac{\nu\Delta t}{2} \| \Delta_h w_h^{n+1} \|^2
\le
\nu^{-1} \Delta t \| \nabla \times f^{n+1} \|^2 +C\Delta t\left( \nu^{-5} C_6^6 C_5^2 + \nu^{-3}C_6^4 C_5^2\right).  \label{cc2}
\end{multline}
From here, the same techniques as for the long-time $H^1$ stability of velocity can be used to complete the proof, modulo a higher norm on the forcing term.
\end{proof}

\section{Numerical Experiments}

We run several numerical experiments in order to test the long-time stability of Algorithm \ref{bdf2}, which is the BDF2 timestepping algorithm for the proposed velocity-vorticity method.  However, as our interest is in practical applications, we do not consider a test problem with periodic boundary conditions; instead, we consider 2D channel flow past a flat plate, which uses a Dirichlet velocity inflow, no-slip velocity on the walls, and a zero-traction outflow condition.  Thus we must appropriately modify Algorithm \ref{bdf2} so that physical boundary  conditions for the velocity and vorticity can be applied.

\begin{figure}[ht]
\includegraphics[width=0.75\textwidth]{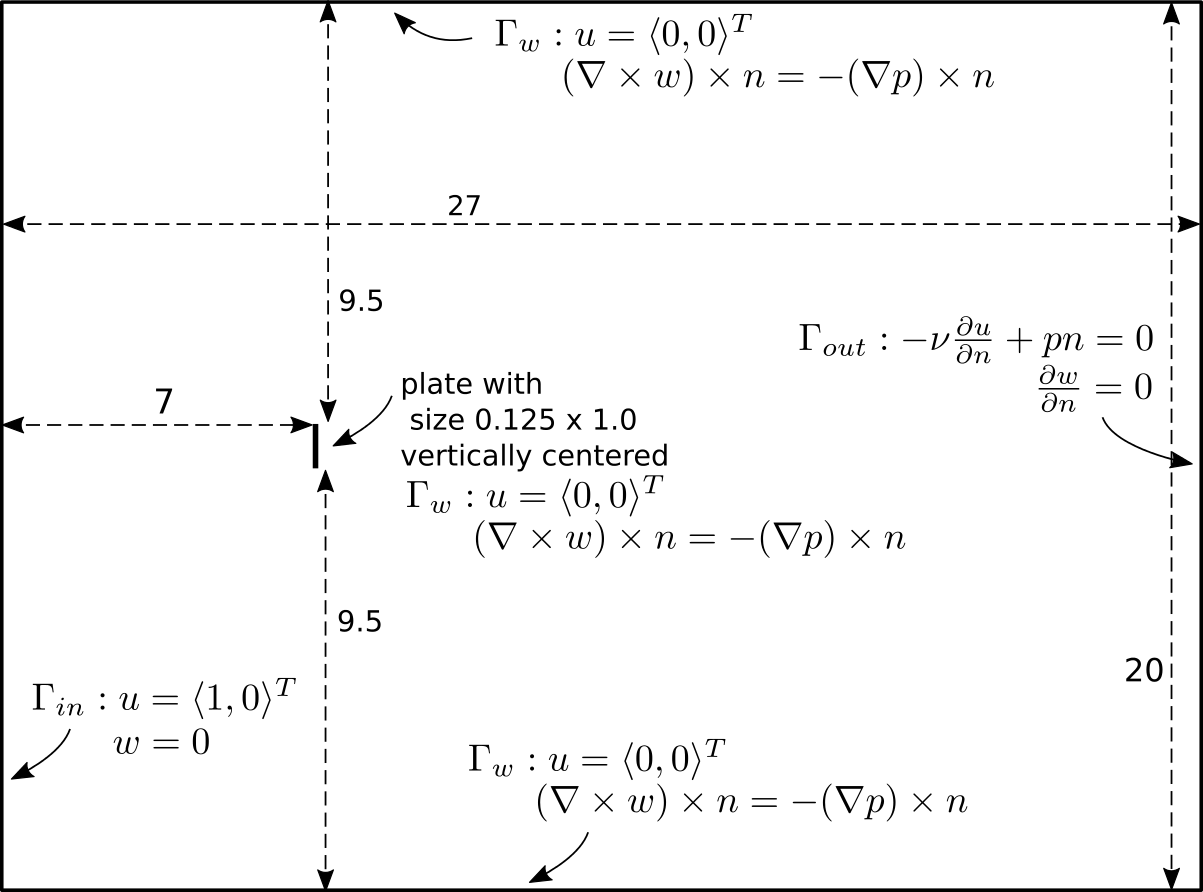}
 \caption{Setup for the flow past a normal flat plate.}
 \label{fig:plate_setup}
\end{figure}

As a numerical illustration of the long term numerical stability,  we compute the flow past a normal
flat plate following \cite{S07,S13}, see Figure~\ref{fig:plate_setup}. We take as the domain $\Omega=[-7, 20] \times [-10, 10]$, with a hole of size $0.125 \times 1$ (representing the flat plate) removed from 7 units into the channel from the left, vertically centered.  The inflow velocity is $u_{in}=\langle 1, 0 \rangle^T$, and no-slip velocity is enforced on the walls and plate.  Direct numerical simulations for this experiment are done for various Reynolds numbers $Re$, which can be considered here as $Re=\nu^{-1}$, since the length of the plate is 1, and the inflow velocity has average magnitude 1.  This
is relatively simple, but interesting problem, which resembles the flow past other bluff objects.
The  flow undergoes a first Hopf bifurcation from steady to unsteady at a relatively low Reynolds
numbers between 30 and 35 \cite{S07} and a second transition, also known as spatial transition from
two-dimensional to three-dimensional, occurs around Re=200 \cite{najjar1995simulations}.
We will test the velocity-vorticity algorithm and its long-time stability for Re=100 and Re=125.

The mathematical formulation of the problem has a constant in time non-homogeneous inflow boundary condition and zero source term. We deem this setting somewhat similar to the one analyzed in the paper (periodic boundary conditions and $L^{\infty}(0,\infty;H^1(\Omega))$-bounded right hand side), but more practically relevant.

\subsection{Velocity-vorticity formulation with boundary conditions}

Denote the domain by $\Omega$, with boundary $\partial\Omega=\Gamma_{in} \cup \Gamma_{out} \cup \Gamma_w$ split into inflow $\Gamma_{in}$, outflow $\Gamma_{out}$, and walls (of channel and plate) $\Gamma_w$.  Denote by $\tau_h$ a regular, conforming triangulation of $\Omega$.  The trial and test spaces for velocity functions are defined by
\begin{align*}
X_h^0 &:= \{ v_h \in C^0(\Omega)^2 \cap P_2(\tau_h)^2,\  \   v_h |_{\Gamma_{in} \cup \Gamma_w}=0 \}, \\
X_h^g &:= \{ v_h \in C^0(\Omega)^2 \cap P_2(\tau_h)^2,\ v_h |_{\Gamma_{in} \cup \Gamma_w}=g \},
\end{align*}
with $g=\langle 1, 0 \rangle^T$ at the inflow, $g=\langle 0, 0 \rangle^T$ on the walls, and with $P_2(\tau_h)$ denoting the
space of globally continuous functions which are quadratic on each triangle.  The discrete pressure space is taken to be
\[
Q_h = \{ q \in C^0(\Omega) \cap P_1(\tau_h) \},
\]
and the zero traction boundary condition will be enforced weakly in the formulation.  Note that $(X_h^0,Q_h)$ is the Taylor-Hood velocity-pressure element, which is known to be inf-sup stable \cite{BS08}.  The vorticity trial and test spaces are equal, since we take the vorticity at the inflow to be 0.  The outflow condition for vorticity is a homogeneous Neumann condition, which is enforced weakly by the formulation.  { The appropriate vorticity boundary condition on $\Gamma_w$ is 
\[
(\nabla \times w)  \times n=-(\nabla p)\times n
\]
where $n$ in a normal vector on $\Gamma_w$, see~\cite{GHOR15}.
In the finite element formulation this is a natural boundary condition,
resulting in the presence of the following term, cf.~\cite{GHOR15},
\[
\int_{\Gamma_w} p_h(\nabla\times\bchi_h)\cdot n\,ds
 -\int_{\partial \Gamma_w} p_h\, \bchi_h dl  \quad \forall~ \bchi_h\in W_h.
\]}
The term is added to the formulation with the known pressure from Step 1.  Thus the vorticity space is
\[
W_h  := \{ w_h \in C^0(\Omega) \cap P_2(\tau_h),\  \bw_h |_{\Gamma_{in}}=0 \}.
\]

A second modification is made to the algorithm to avoid using the Bernoulli pressure, since there is an outflow boundary.  Here, we use the identity from \cite{BL10},
\[
(\nabla \times u) \times u + \nabla \left(p + \frac12 |u|^2 \right)
=
\frac12 (\nabla \times u) \times u
+ \nabla p
+ D(u)u,
\]
where $D(u) = \frac12 \left(\nabla u + (\nabla u)^T \right)$ is the rate of deformation tensor. {Thus the
`do-nothing' conditions we use on the outflow boundary correspond to enforcing
\[
-\nu\frac{\partial u}{\partial n}+ p\,n=0,\quad \frac{\partial w}{\partial n}=0\quad\text{on}~\Gamma_{out}
\] 
in the strong formulation.}

Since there is no forcing in this test problem, we set $f=0$, and thus now Steps 1 and 2 of Algorithm \ref{bdf2} can now be written as they are computed:

Step 1:  Find $(\bu_h^{n+1},p_h^{n+1})\in (X_h^g,Q_h)$ satisfying
\begin{align*}
\frac{1}{2\Delta t}(3u_h^{n+1} - 4u_h^n + u_h^{n-1},\bv_h)
+ \frac12 \left((2\bw_h^{n}-\bw_h^{n-1}) \times \bu_h^{n+1},\bv_h\right)\qquad & \\
+ \left( D(u_h^{n+1}) (2u_h^n - u_h^{n-1}),v_h \right)
- (p_h^{n+1},\nabla \cdot \bv_h)
+ \nu(\nabla \bu_h^{n+1},\nabla \bv_h) & = 0\quad \forall~\bv_h \in X_h^0 \\
(\nabla \cdot \bu_h^{n+1},q_h)& = 0 \quad \forall~q_h \in Q_h.
\end{align*}

Step 2: Find $w_h^{n+1} \in W_h$ satisfying
\begin{multline}\label{step2}
\frac{1}{2\Delta t}(3\bw_h^{n+1} - 4\bw_h^n + w_h^{n-1},\bchi_h)
+ (\bu_h^{n+1} \cdot \nabla \bw_h^{n+1},\bchi_h)
+ \nu(\nabla \bw_h^{n+1},\nabla \bchi_h)
\\
 =
 - \int_{\Gamma_w} p_h^{n+1}(\nabla\times\bchi_h)\cdot n\,ds
 +\int_{\partial \Gamma_w} p_h^{n+1}\, \bchi_h dl  \quad \forall~ \bchi_h\in W_h.
\end{multline}
We note that since globally continuous pressure elements are used, the right hand side of \eqref{step2} can be equivalently written as
\[
- \int_{\Gamma_w} p_h^{n+1}(\nabla\times\bchi_h)\cdot n\,ds
 +\int_{\partial \Gamma_w} p_h^{n+1}\, \bchi_h dl
 =
  \int_{\Gamma_w} (\nabla p_h^{n+1} \times n) \cdot \bchi_h \,ds.
 \]

\subsection{Channel flow past a flat plate at Re=100 and Re=125}\label{s_exp}

The BDF2 velocity-vorticity scheme was computed for both Re=100 and Re=125 ($\nu$=Re$^{-1}$), using 3 Delaunay generated triangular meshes which provided 79509 total degrees of freedom (dof), 116045 dof, and 159055 dof with the $(P_2^2,P_1,P_2)$ velocity-pressure-vorticity elements.  The simulations started the flow from rest $(u_h^0=0)$, and were run to an endtime T=200.  For each mesh, several timestep choices were made, starting with $\Delta t$=0.04, and then cutting $\Delta t$ in half until convergence (i.e., successive solutions' statistics matched).  For both Re=100 and Re=125, the smallest $\Delta t$ was 0.01.

Quantities of interest for this problem is the long-time average of the drag coefficient $C_d$, and the Strouhal number.  The Strouhal number was calculated as in \cite{S07,S13}, using the fast Fourier transform of the transverse velocity at (4.0, 0.0) from T=120 to T=200.  The drag coefficients are defined at each $t^n$ to be
\begin{align*}
C_d(t^m) & =  \frac{2}{\rho L U_{max}^2} \int_{S} \left( \rho \nu \frac{\partial u_{t_S}(t^m)}{\partial n} n_y - p_h^m n_x \right) \ dS,
\end{align*}
where $S$ is the plate, $\bn=\langle n_x,n_y \rangle$ is the outward normal vector to $S$ pointing into the domain, $\bu_{t_S}(t^m)$ is the tangential velocity of $u_h^m$, the density $\rho=1$, the max velocity at the inlet $U_{max}=1$, and $L=1$ is the length of the plate.  The integral is calculated by transforming it into a global integral, which is believed to be more accurate \cite{J04}.  The results for time averaged $C_d$ and the Strouhal numbers from the simulations for each Re, and for each mesh (with $\Delta t=0.01$), are shown in Table~\ref{dragtable}, along with reference values taken from \cite{S13}.  We observe that the 116K dof mesh and the 159K dof meshes agree well with the reference values at Re=100 and Re=125.   It appears we have achieved (or are close to) grid-convergence, and we note that for the Strouhal number, since the FFT was used with 8,000 timesteps, 0.177 was the closest discrete frequency value to 0.174, and 0.189 was the next biggest discrete value compared to 0.183.   We also plot the time-averaged vorticity in Figure~\ref{plateplots3}, and instantaneous velocity (as speed contours) in Figure~\ref{plateplots4}; both plots match the reference plots given in \cite{S13}.

\begin{table}[h!]
\begin{center}
\begin{tabular}{|c|c|c|c|c|c|}
\hline
Method & Mesh & Re &  $C_d$ & Strouhal number \\ \hline
Vel-Vort & 78K dof & 100 & 2.48 & 0.195 \\ \hline
Vel-Vort    & 116K dof & 100 & 2.59 &  0.189  \\ \hline
Vel-Vort   & 159K dof & 100 & 2.58 &  0.189  \\ \hline
Saha \cite{S13} &   & 100 & 2.60  & 0.183 \\ \hline
  & & & &  \\ \hline
 Vel-Vort & 78K dof  & 125 & 2.57 & 0.189  \\ \hline
Vel-Vort  & 116K dof  & 125  & 2.60 &  0.177 \\ \hline
Vel-Vort &  159K dof & 125 & 2.59 & 0.177 \\ \hline
Saha \cite{S13} &   & 125 & 2.55 &  0.174 \\ \hline
\end{tabular}
\end{center}
\caption{\label{dragtable}  Shown above are Strouhal numbers and long-time average drag coefficients for solutions on varying meshes, for Re=100 and Re=125.  Reference values are also given for comparison.}
\end{table}

Also of interest is the stability of computed solutions in the $\| u_h^n \|_{L^2},\ \| u_h^n \|_{H^1},\ \| w_h^n \|_{L^2},\ \| w_h^n \|_{H^1}$ norms versus time $t^n$, since we proved in Section~\ref{s_BDF} that these norms are all long-time stable (at least, in the periodic setting), independent of the timestep $\Delta t$ and mesh width $h$.  Plots of these norms versus time are shown for Re=100 in Figure~\ref{plateplots1} and for Re=125 in Figure~\ref{plateplots2} for varying timesteps. Each norm appears to be long-time stable.  Moreover, we do not observe the very large scaling of any of the norms with $Re$.  Although $\| \nabla w_h^n\| \approx O(500)$ is an order of magnitude larger than $\| w_h^n\|$, it is still a very reasonable size and nowhere near $O(Re^{19})$ or even $O({ (1+|\ln h|)}^{\frac32} Re^{5})$.

\begin{figure}[h!]
\label{dec}
\begin{center}
Re=100 \\
\includegraphics[width=0.9\textwidth,height=0.2\textwidth, trim=100 10 60 0, clip]{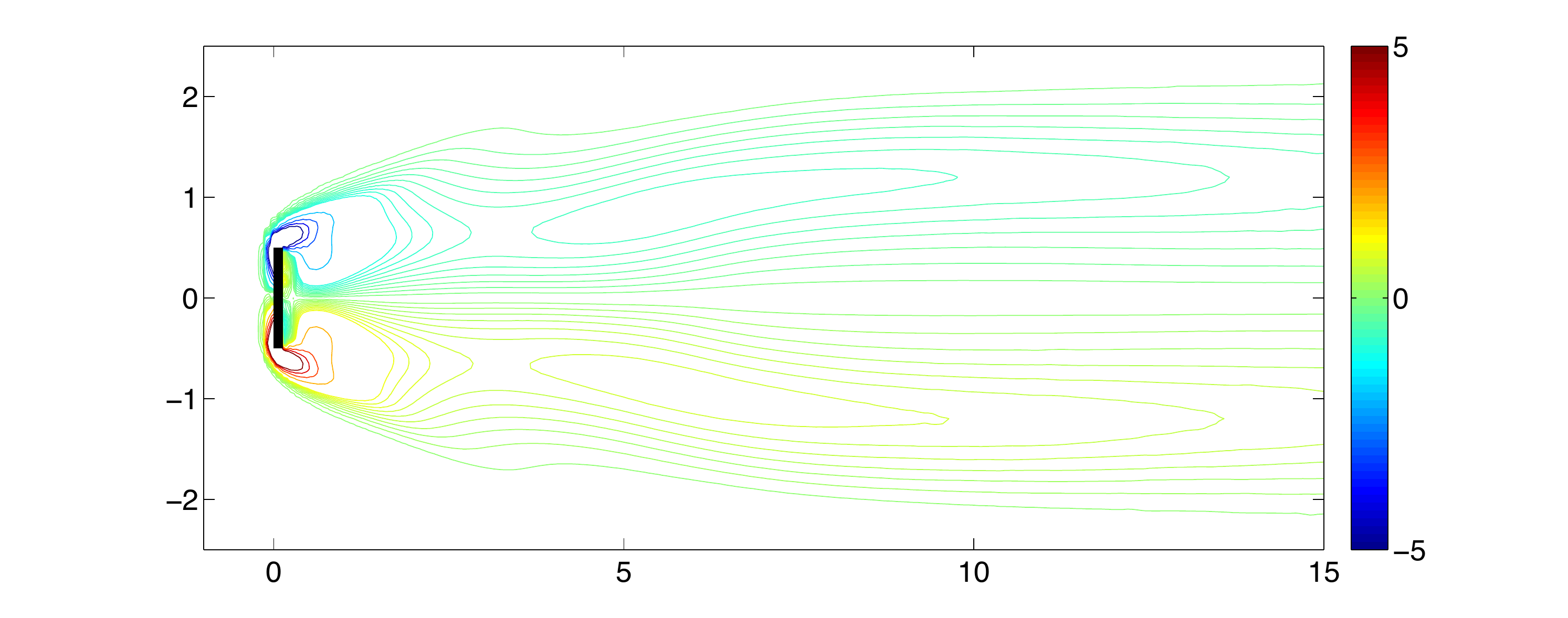} \\
Re=125\\
\includegraphics[width=0.9\textwidth,height=0.2\textwidth, trim=100 10 60 0, clip]{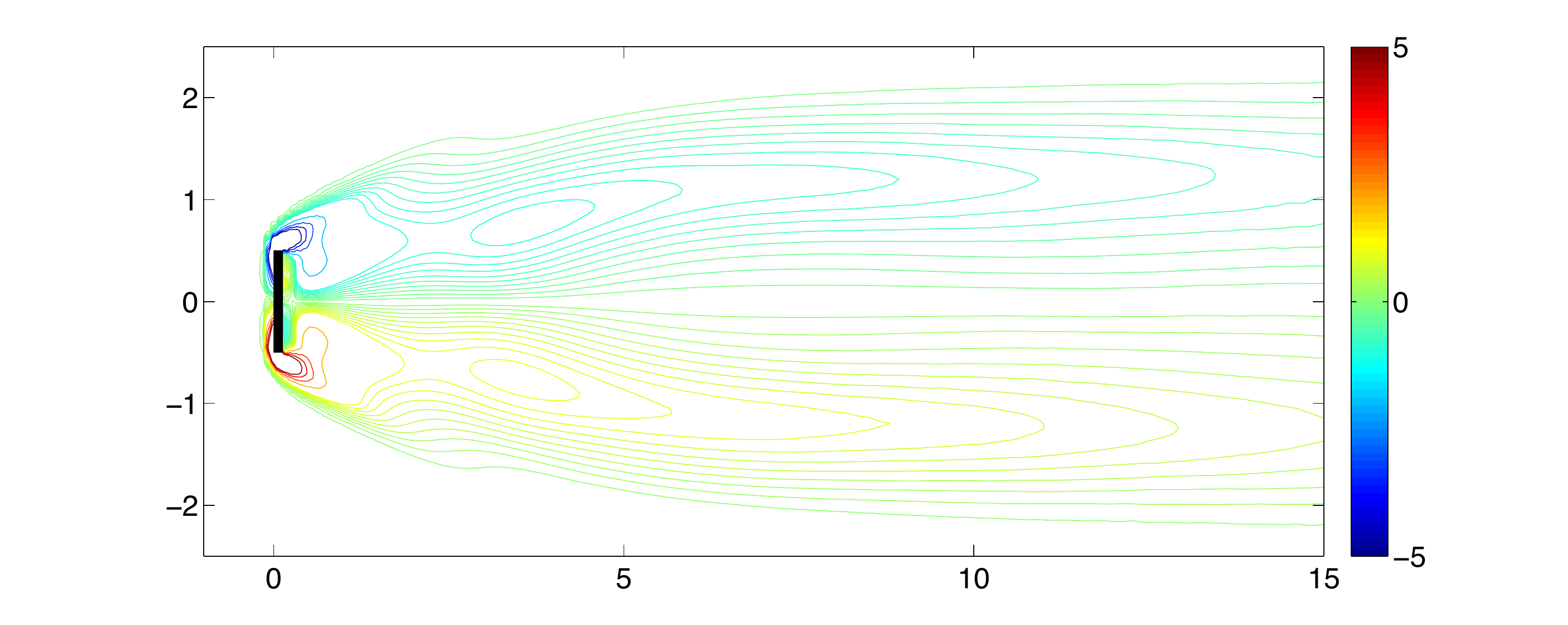}
\end{center}
\caption{\label{plateplots3} Shown above are plots of the time-averaged vorticity contours.}
\end{figure}

\begin{figure}[h!]
\label{dec2}
\begin{center}
Re=100 \\
\includegraphics[width=0.9\textwidth,height=0.2\textwidth, trim=100 10 60 0, clip]{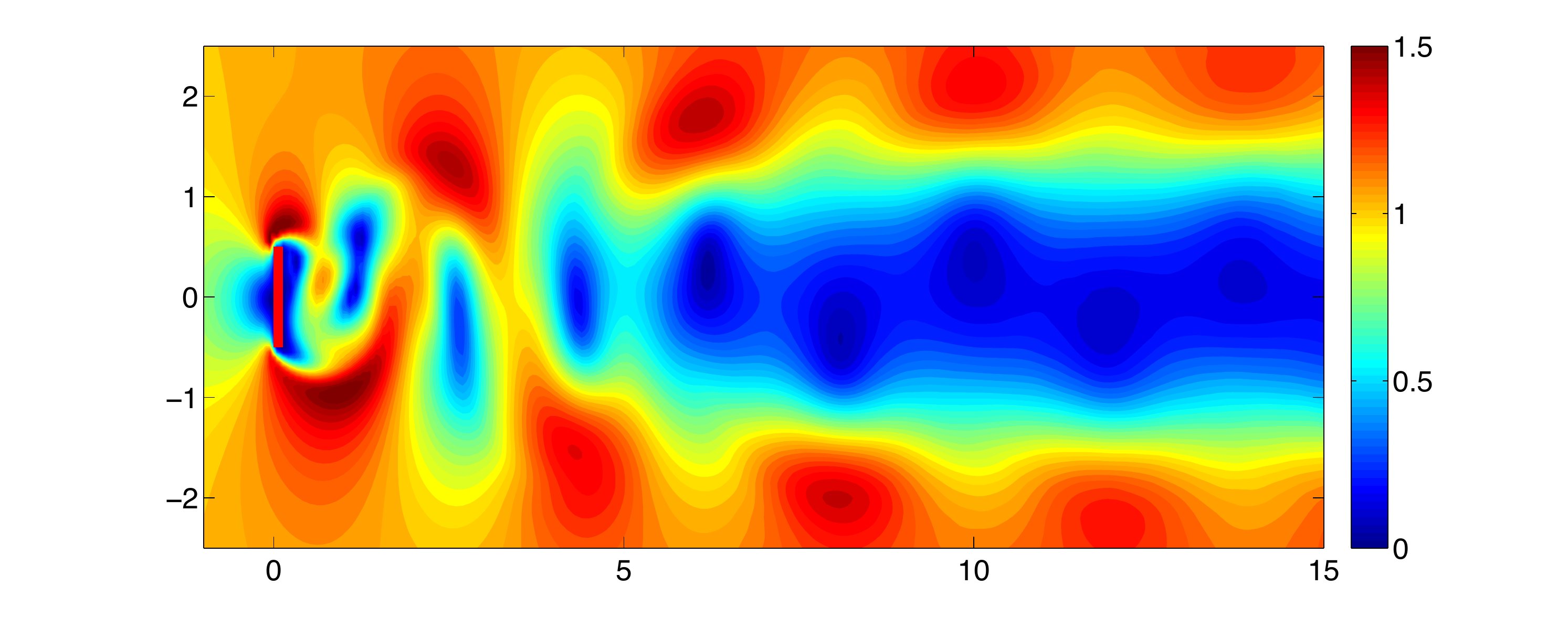} \\
Re=125\\
\includegraphics[width=0.9\textwidth,height=0.2\textwidth, trim=100 10 60 0, clip]{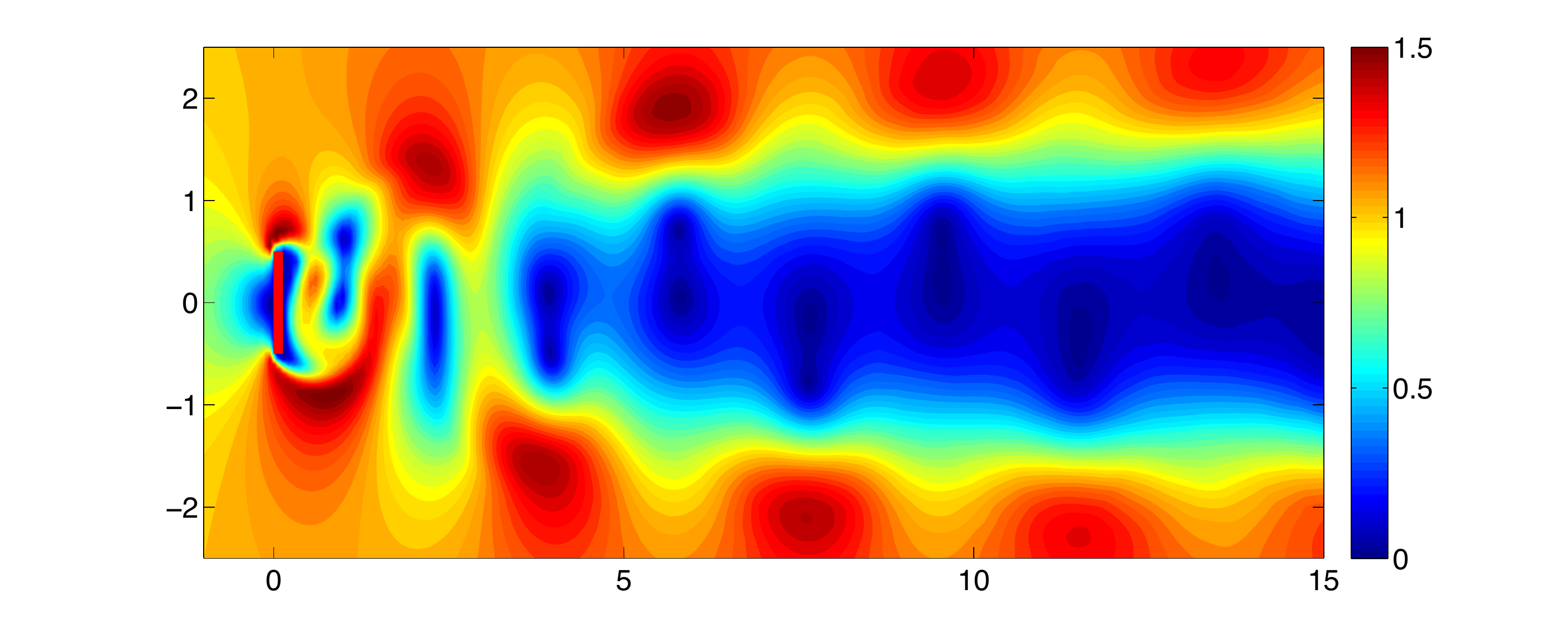}
\end{center}
\caption{\label{plateplots4} Shown above are plots of the speed contours of the velocity solutions at T=200.}
\end{figure}

\begin{figure}[h!]
\label{dec3}
\begin{center}
\includegraphics[width=0.49\textwidth,height=0.25\textwidth, trim=20 0 60 0, clip]{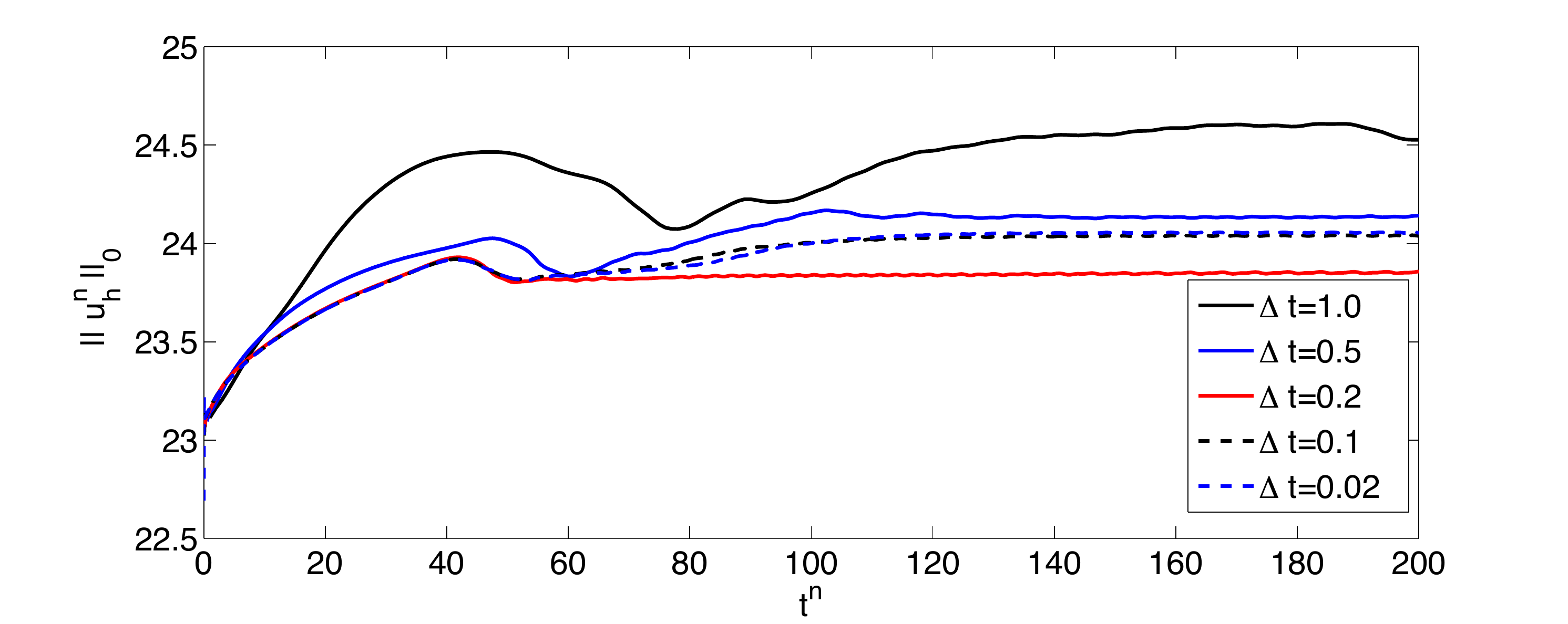}
\includegraphics[width=0.49\textwidth,height=0.25\textwidth, trim=20 0 60 0, clip]{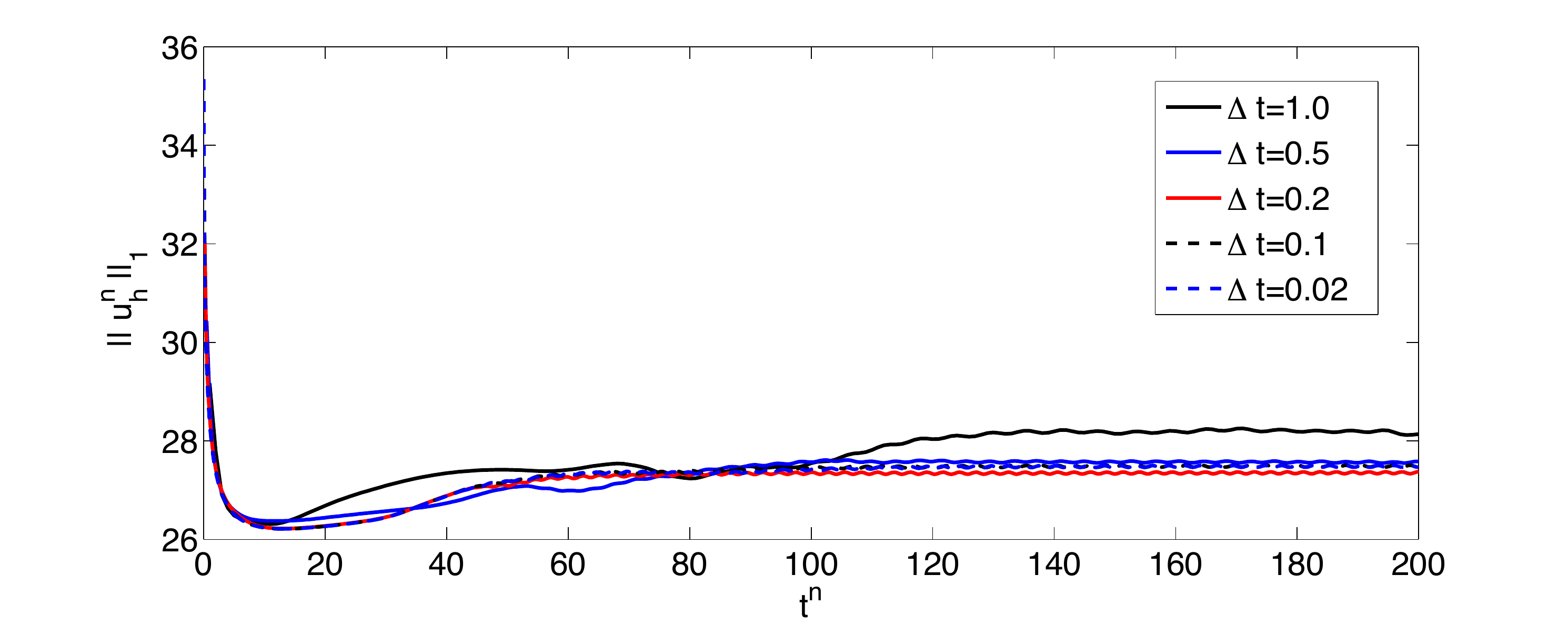}\\
\includegraphics[width=0.49\textwidth,height=0.25\textwidth, trim=20 0 60 0, clip]{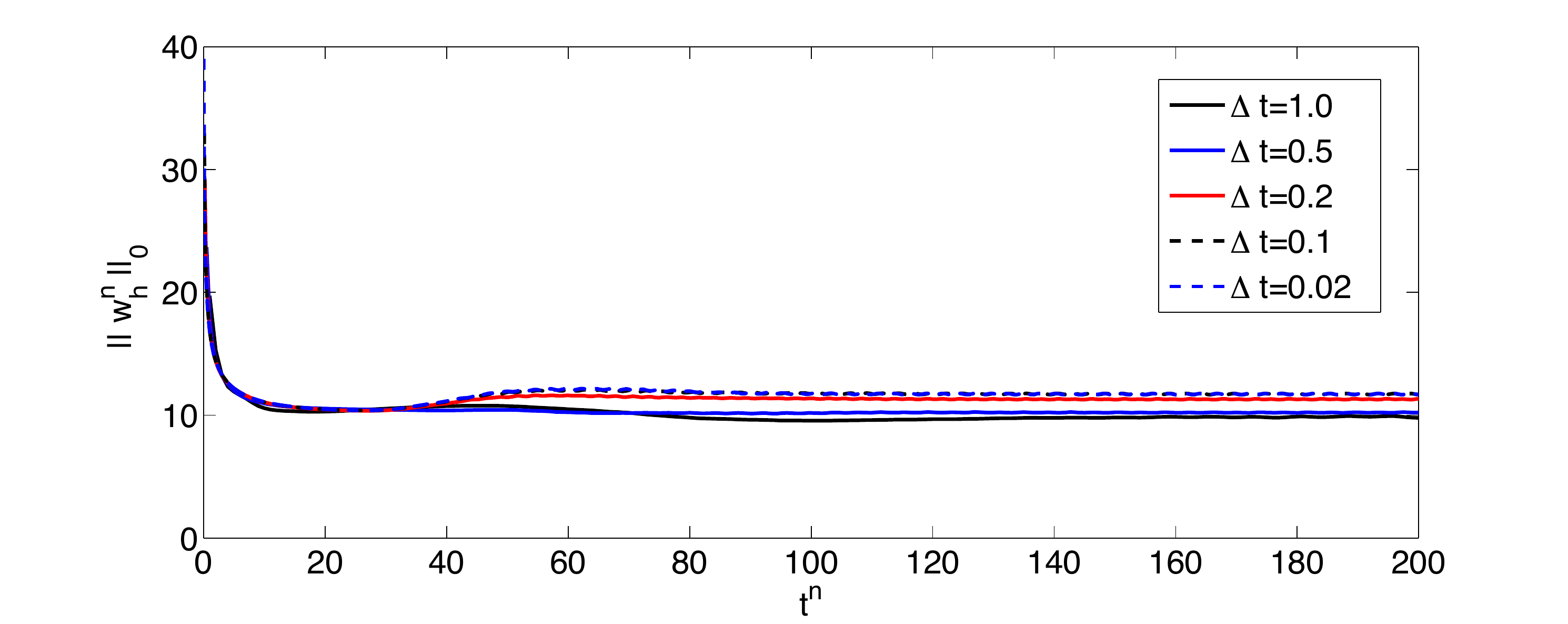}
\includegraphics[width=0.49\textwidth,height=0.25\textwidth, trim=20 0 60 0, clip]{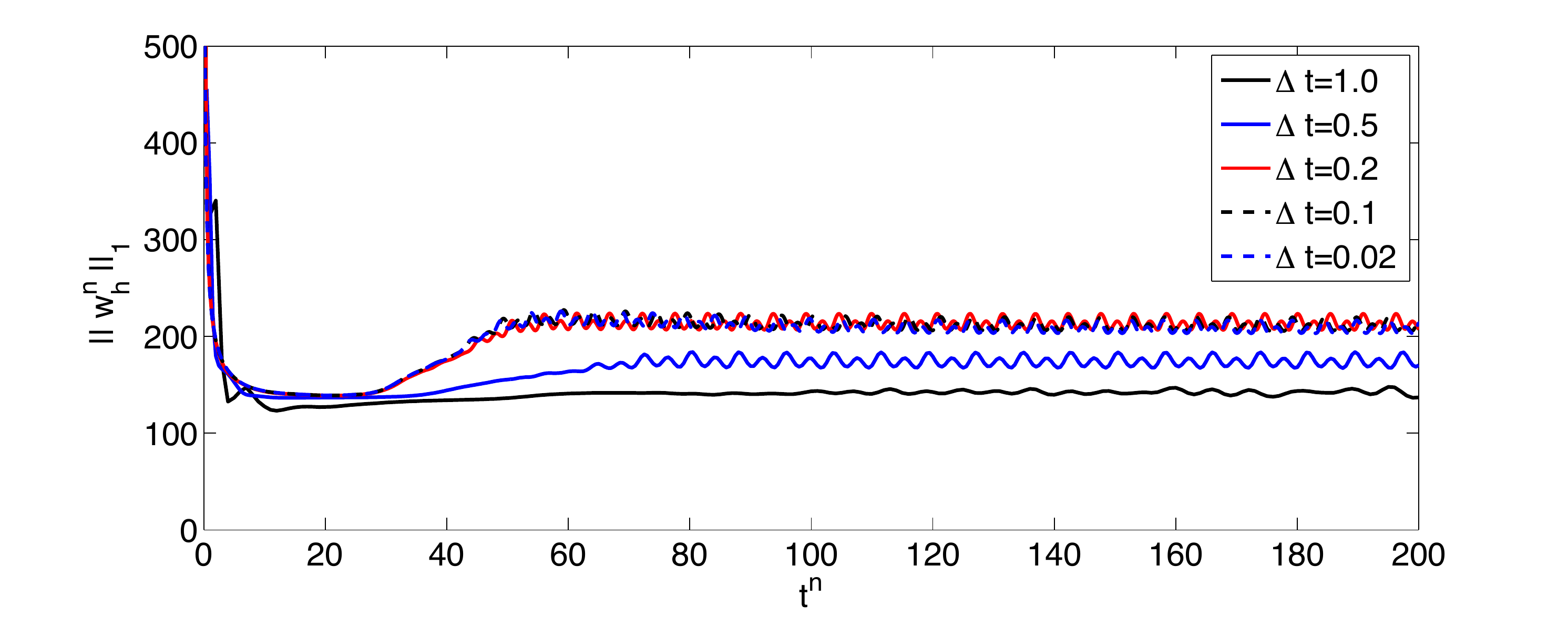}
\end{center}
\caption{\label{plateplots1} Shown above are plots of the Re=100 solution norms versus time, found using Mesh 3 (the finest mesh). }
\end{figure}

\begin{figure}[h!]
\label{dec4}
\begin{center}
\includegraphics[width=0.49\textwidth,height=0.25\textwidth, trim=20 0 60 0, clip]{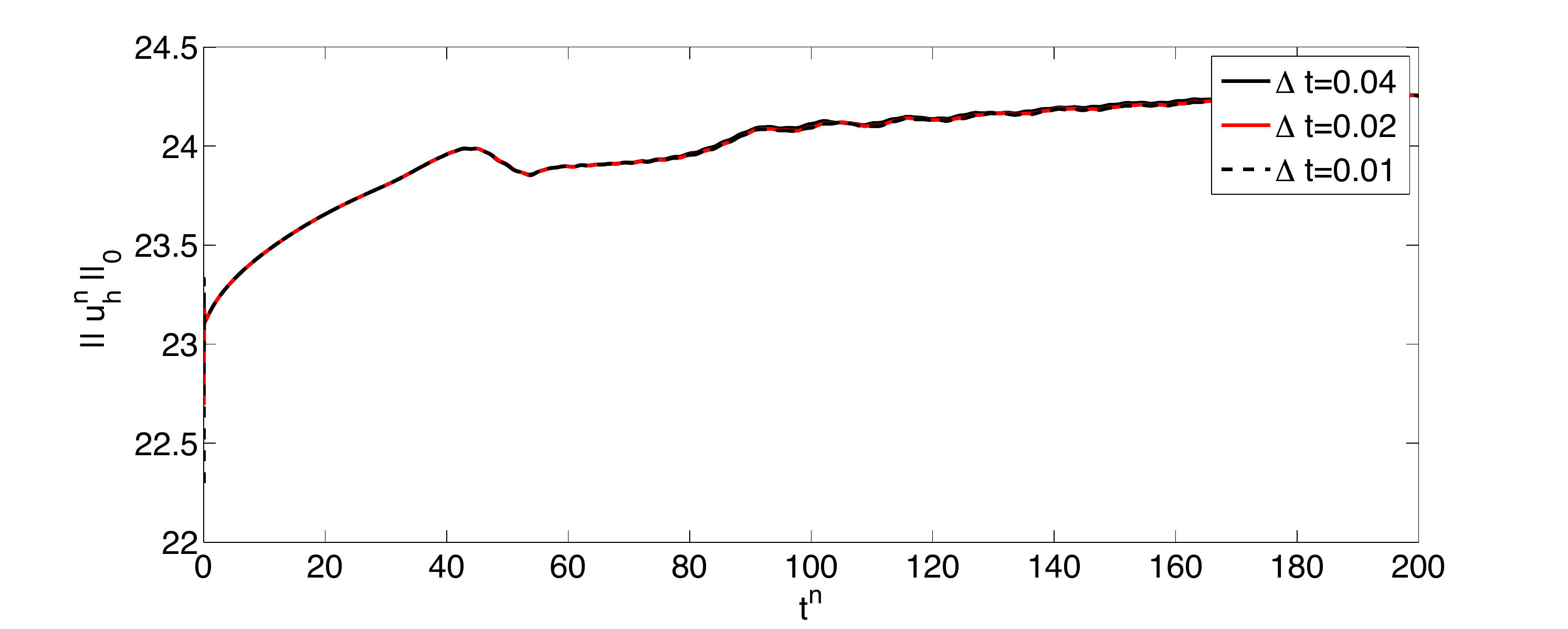}
\includegraphics[width=0.49\textwidth,height=0.25\textwidth, trim=20 0 60 0, clip]{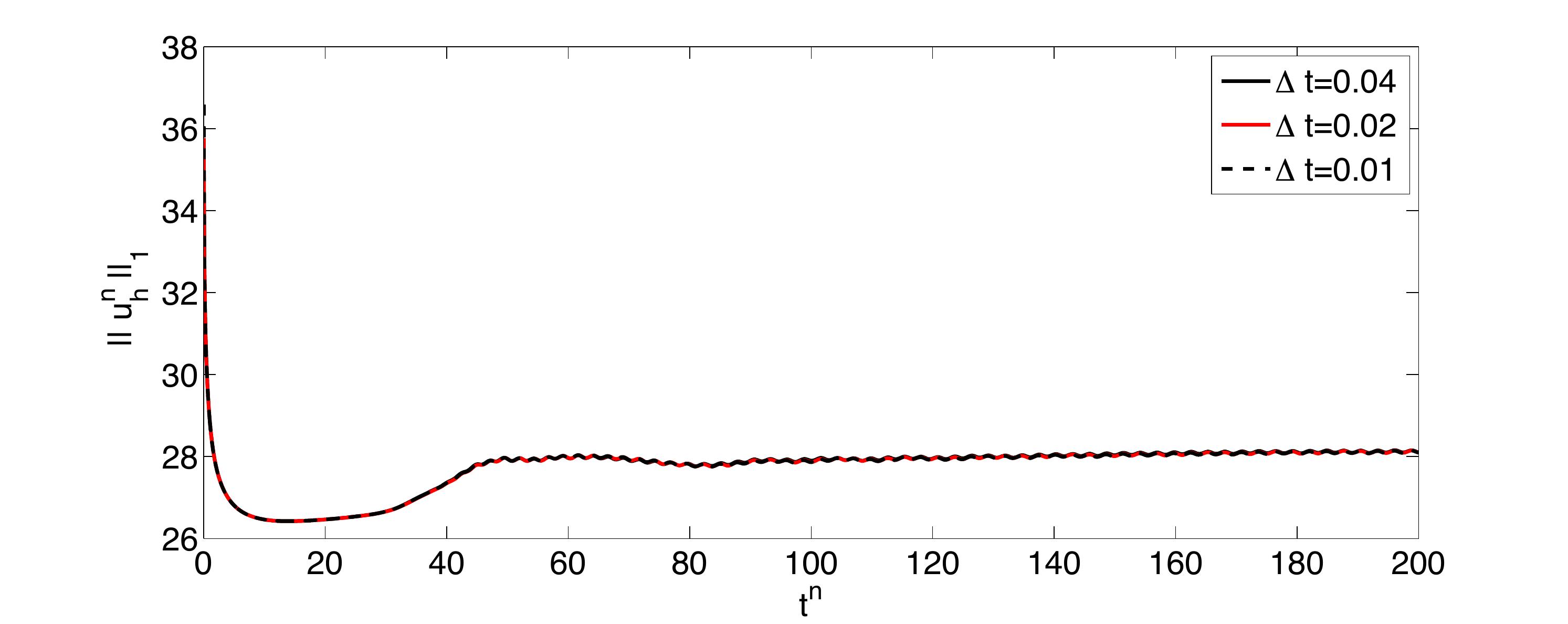} \\
\includegraphics[width=0.49\textwidth,height=0.25\textwidth, trim=20 0 60 0, clip]{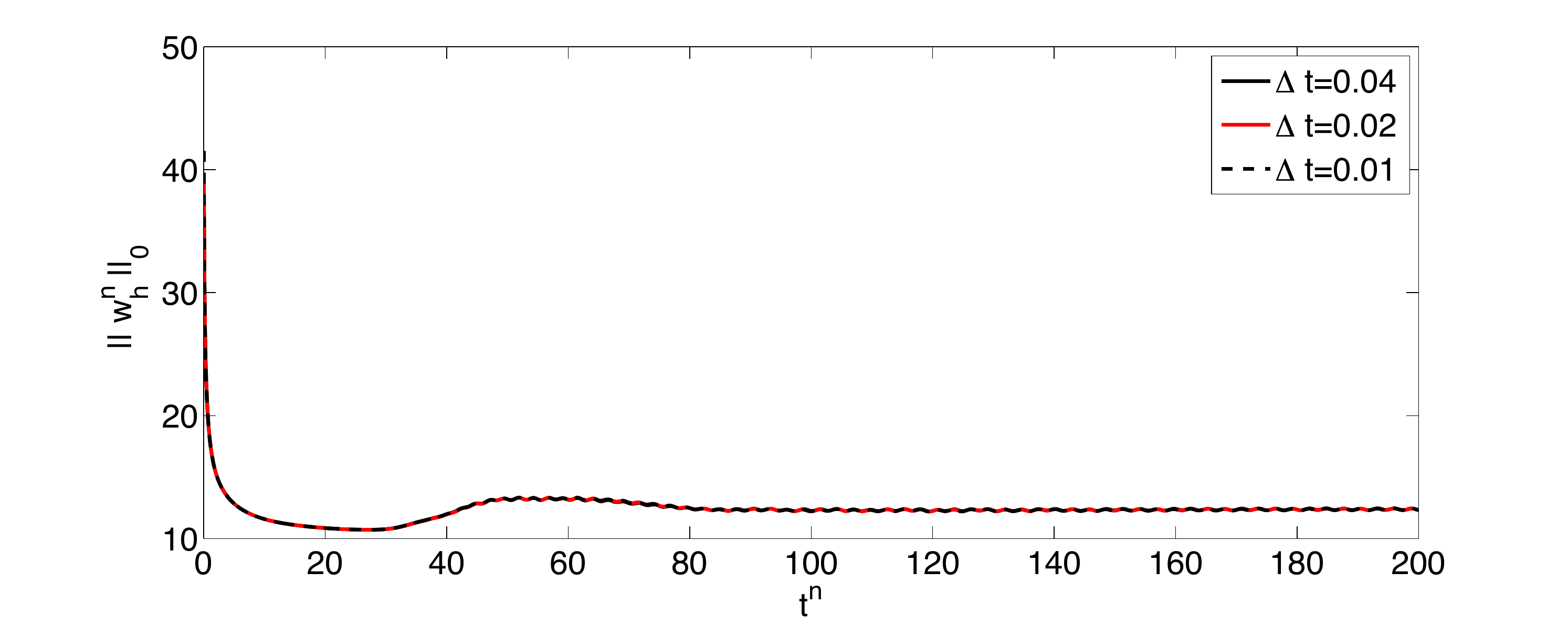}
\includegraphics[width=0.49\textwidth,height=0.25\textwidth, trim=20 0 60 0, clip]{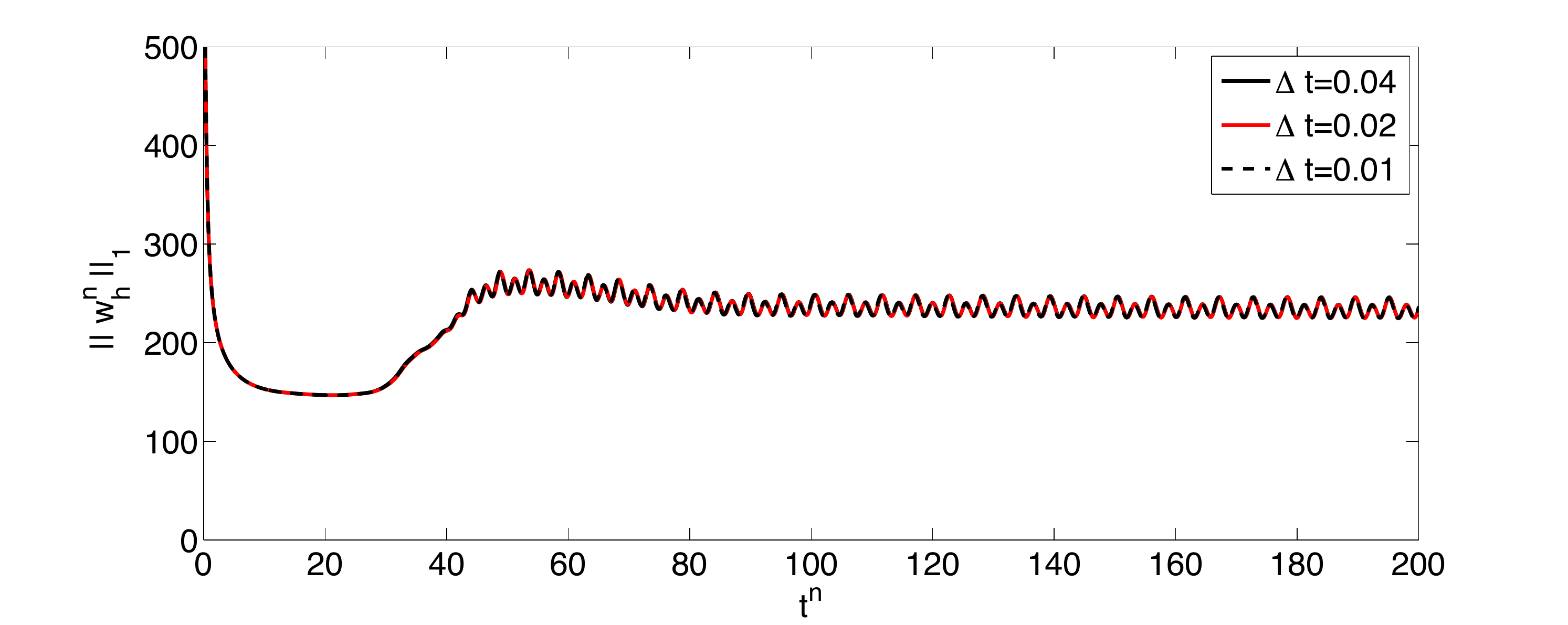}
\end{center}
\caption{\label{plateplots2} Shown above are plots of the Re=125 solution norms versus time, found using Mesh 3 (the finest mesh).}
\end{figure}

\section{Conclusions and Future Directions}
We have proven unconditional long-time stability of a scheme based on a velocity-vorticity formulation, and a finite-element-in-space BDF2-in-time IMEX discretization for the 2D Navier-Stokes equations.  Long-time stability was proven in both the $L^2$ and $H^1$ norms for both velocity and vorticity, and the estimates hold for any $\Delta t>0$.  The scheme is non-standard, and so we tested it on a benchmark problem on flow past a flat plate; it performed very well.

It would be interesting to study Algorithm \ref{bdf2}, and variations thereof, for 3D flows.  The difference in 3D is that the vortex stretching term $-(w\cdot\nabla u)$ appears in the vorticity equation.  Since the 2D algorithm is proven herein to be unconditionally long-time stable, any instability in the 3D algorithm can be immediately attributed to the vortex stretching term and/or its numerical treatment.  Isolating this behavior may give insight into better stabilization methods for higher Reynolds number flows in 3D.

\bibliographystyle{plain}

\bibliography{graddiv,bibliography}

\end{document}